\newtheorem{thm}{Theorem}
\newtheorem{cor}[thm]{Corollary}
\newtheorem{defi}[thm]{Definition}
\newtheorem{rem}[thm]{Remark}
\newtheorem{nota}[thm]{Notation}
\newtheorem*{tempo*}{Template}
\newcommand\be{\begin{equation}}
\newcommand\ee{\end{equation}} 
\def\bdefi{\begin{defi}\rm}
\def\edefi{\end{defi}}
\def\bnota{\begin{nota}\rm}
\def\enota{\end{nota}}
\def\ZFC{\textup{\textsf{ZFC}}}
\def\({\textup{(}}
\def\){\textup{)}}
\def\bye{\end{document}}
\def\N{{\mathbb  N}}
\def\Q{{\mathbb  Q}}
\def\R{{\mathbb  R}}
\def\SS{\textup{\textsf{S}}}
\def\osc{\textup{\textsf{osc}}}
\def\di{\rightarrow}
\def\asa{\leftrightarrow}
\def\u{\textup{\textsf{u}}}
\def\fin{\textup{\textsf{fin}}}
\def\eps{\varepsilon}
\newcommand{\F}{{\bf F}}
\numberwithin{equation}{section}
\numberwithin{thm}{section}
\begin{document}
\title[]{On the computational properties of the Baire category theorem}
\author{Sam Sanders}
\address{Department of Philosophy II, RUB Bochum, Germany}
\email{sasander@me.com}
\keywords{Higher-order computability theory, Kleene S1-S9, Baire category theorem, Baire 1 functions}
\subjclass[2010]{03D75, 03D80}
\begin{abstract}
%
Computability theory is a discipline in the intersection of computer science and mathematical logic where the fundamental question is:
\begin{center}
\emph{given two mathematical objects $X$ and $ Y$, does $X$ compute $Y$ {in principle}?}
\end{center} 
In case $X$ and $ Y$ are real numbers, Turing's famous `machine' model provides the standard interpretation of `computation' for this question.
To formalise computation involving (total) abstract objects, Kleene introduced his S1-S9 computation schemes.
In turn, Dag Normann and the author have  introduced a version of the lambda calculus involving fixed point operators that \emph{exactly} captures S1-S9 \emph{and} accommodates partial objects. 
In this paper, we use this new model to develop the computability theory of various well-known theorems due to Baire and Volterra and related results; these theorems only require basic mathematical notions like continuity, open sets, and density. 
We show that these theorems due to Baire and Volterra are \emph{computationally equivalent} from the point of view of our new model, sometimes working in rather tame fragments of G\"odel's $T$.
\end{abstract}


\maketitle
\thispagestyle{empty}


\section{Introduction}\label{intro}
\subsection{Motivation and overview}\label{mintro}
Computability theory is a discipline in the intersection of theoretical computer science and mathematical logic where the fundamental question is:
\begin{center}
\emph{given two mathematical objects $X $ and $ Y$, does $X$ compute $Y$ {in principle}?}
\end{center} 
In case $X $ and $Y$ are real numbers, Turing's famous `machine' model (\cite{tur37}) is the standard approach to this question, i.e.\ `computation' is interpreted in the sense of Turing machines.  
To formalise computation involving (total) abstract objects, like functions on the real numbers or well-orderings of the reals, Kleene introduced his S1-S9 computation schemes (\cites{kleeneS1S9, longmann}).

\smallskip

In turn, Dag Normann and the author have recently introduced (\cite{dagsamXIII}) a version of the lambda calculus involving fixed point operators that exactly captures S1-S9 and accommodates partial objects. 
In this paper, we use this new model to develop the computability theory of various theorems due to Baire and Volterra and related results.  
We show that these theorems are \emph{computationally equivalent} from the point of view of our new model, as laid down by Definition \ref{spec} below. 
We stress that these theorems due to Baire and Volterra only require basic mathematical notions, like continuity, open sets, and density. 
All non-basic definitions and some background may be found in Sections \ref{vintro} and \ref{kelim}.

\medskip
\noindent
In more detail, consider the following three theorems due to Baire and Volterra from the 19th century (\cites{beren2, volaarde2}).
The notion of \emph{Baire 1 function} is also due to Baire (\cite{beren2}) and means that the function is the limit of a sequence of continuous functions.  
\begin{itemize}
\item (Baire category theorem) Let $(O_{n})_{n\in \N}$ be a sequence of dense and open sets of reals.  Then the intersection $\cap_{n\in \N}O_{n}$ is non-empty (and dense).
\item (Volterra) There is no function that is continuous on the rationals and discontinuous on the irrationals.  
\item  (Baire) For $f:[0,1]\di \R$ in Baire 1, the set of continuity points is non-empty (and dense).  
\end{itemize}  
The first item is well-studied in other computational approaches to mathematics like \emph{computable analysis} (\cites{brakke, brakke2}) or \emph{constructive mathematics} (\cite{bish1}*{p.\ 84} and \cite{nemoto1}).  
The second item is a mainstay of popular mathematics (\cites{shola, kinkvol}), and usually proved via the first item (\cite{dendunne}).  More detailed (historical) background may be found in Section~\ref{vintro}.  

\medskip
\noindent
The above theorems yield the following natural computational operations.  A functional performing the operation in item \eqref{don1} is called a \emph{Baire realiser} (\cite{dagsamVII}).
\begin{enumerate}
\renewcommand{\theenumi}{\alph{enumi}}
\item On input a sequence of dense and open sets of reals $(O_{n})_{n\in \N}$, output a real $x \in \cap_{n\in \N}O_{n}$.\label{don1}
\item On input a Baire 1 function from reals to reals, output a rational where it is discontinuous, or an irrational where it is continuous.\label{don2}
\item On input a Baire 1 function from reals to reals, output a point of contuity.\label{don3}
\end{enumerate}  
We show in Section \ref{main3} that the operations in items \eqref{don1}-\eqref{don3}, as well as a number of related ones, are \emph{computationally equivalent} as defined in Definition \ref{spec} below.
We emphasise that each of the items \eqref{don1}-\eqref{don3} should be viewed as a specification for a class of (possibly partial) functionals that produce certain outputs given certain inputs.
The computational equivalence between two specifications $\textsf{\textup{(A)}}$ and $\textsf{\textup{(B)}}$ is then defined as follows.  
\begin{defi}[Computational equivalence]\label{spec} 
Let $\textsf{\textup{(A)}}$ and $\textsf{\textup{(B)}}$ be specifications for classes of functionals.  
We say that $\textsf{\textup{(A)}}$ and $\textsf{\textup{(B)}}$ are \emph{computationally equivalent} if there are algorithms with index $e, d\in \N$ such that:
\begin{itemize}
\item for any functional $\Gamma$ satisfying \textsf{\textup{(A)}}, the functional $\{e\}(\Gamma)$ satisfies \textsf{\textup{(B)}},
\item for any functional $\Lambda$ satisfying \textsf{\textup{(B)}}, the functional $\{d\}(\Lambda)$ satisfies \textsf{\textup{(A)}}. 
\end{itemize}
\edefi
In most cases below, we can replace Kleene's S1-S9 or our model from \cite{dagsamXIII} by rather tame fragments of G\"odel's $T$ and Feferman's $\mu^{2}$ from Section \ref{lll}.  
We note that our specifications are generally restricted to functionals that map $\R\di \R$-functions to $\R\di \R$-functions, or constructs of similar rank limitations.

\medskip

Next, to study of continuous functions in computability theory (or constructive mathematics), 
such functions are usually given together with a \emph{modulus\footnote{Intuitively speaking, a modulus of continuity is a functional that witnesses the usual `epsilon-delta' definition of continuity.  In this way, a modulus of continuity essentially outputs a suitable `delta' on input any $\eps>0$ and $x$ in the domain, as in the usual epsilon-delta definition.} of continuity}, i.e.\ a constructive enrichment providing computational information about continuity.  
As discussed in detail in Section \ref{prebaire}, our study of Baire 1 functions in computability theory also makes use of some kind of constructive enrichment, 
namely that Baire 1 functions are given together with their \emph{oscillation function} (see Def.\ \ref{oscf}), going back to Riemann and Hankel (\cites{hankelwoot, rieal}).

\medskip

Finally, we note that by \cite{dagsamVII}*{\S6}, the operations in items \eqref{don1}-\eqref{don3} are \emph{hard\footnote{The functional $\SS_{k}^{2}$ from Section \ref{lll} can decide $\Pi_{k}^{1}$-formulas, but the operation in item \eqref{don1} is not computable in $\SS_{k}^{2}$ (or their union).  Kleene's $\exists^{3}$ from Section \ref{lll} computes the operation in item \eqref{don1}, but the former also yields full second-order arithmetic, as does the union of all $\SS_{k}^{2}$ functionals.\label{klank}} to compute} relative to the usual hierarchy based on comprehension.  
An explanation for this phenomenon may be found in Section \ref{prelim}.

\subsection{Volterra's early work and related results}\label{vintro}
We introduce Volterra's early work from \cite{volaarde2} as it pertains to this paper.  We let $\R$ be the set of real numbers while $\Q$ is the set of rational numbers. 

\medskip

First of all, the Riemann integral was groundbreaking in its day for a number of reasons, including its ability to integrate functions with infinitely many points of discontinuity, as shown by Riemann himself (\cite{riehabi}). 
A natural question is then `how discontinuous' a Riemann integrable function can be.  In this context, Thomae introduced the folllowing function $T:\R\di\R$ around 1875 in \cite{thomeke}*{p.\ 14, \S20}):
\be\label{thomae}\tag{\textup{\textsf{T}}}
T(x):=
\begin{cases} 
0 & \textup{if } x\in \R\setminus\Q\\
\frac{1}{q} & \textup{if $x=\frac{p}{q}$ and $p, q$ are co-prime} 
\end{cases}.
\ee
Thomae's function $T$ is Riemann integrable on any interval, but has a dense set of points of discontinuity, namely $\Q$, and a dense set of points of continuity, namely $\R\setminus \Q$. 

\medskip

The perceptive student, upon seeing Thomae's function as in \eqref{thomae}, will ask for a function continuous at each rational point and discontinuous at each irrational one.
Such a function cannot exist, as is generally proved using the Baire category theorem.  
However, Volterra in \cite{volaarde2} already established this negative result about twenty years before the publication of the Baire category theorem, using a rather elementary argument.

\medskip

Secondly, as to the content of Volterra's \cite{volaarde2}, we find the following theorem on the first page, where a function is \emph{pointwise discontinuous} if it has a dense set of continuity points.
\begin{thm}[Volterra, 1881]\label{VOL}
There do not exist pointwise discontinuous functions defined on an interval for which the continuity points of one are the discontinuity points of the other, and vice versa.
\end{thm}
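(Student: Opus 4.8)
The plan is to reproduce Volterra's original elementary argument via a nested-interval construction, thereby avoiding any appeal to the Baire category theorem (as the historical remarks above promise). Suppose toward a contradiction that $f$ and $g$ are \emph{pointwise discontinuous} functions on an interval whose continuity sets are complementary. Writing $C_f, C_g$ for the respective sets of continuity points, the hypothesis ``continuity points of one are the discontinuity points of the other, and vice versa'' yields $C_f \cap C_g = \emptyset$, while the assumption of pointwise discontinuity makes both $C_f$ and $C_g$ dense. The central tool is the oscillation function from Definition \ref{oscf}: a function is continuous at a point exactly when its oscillation there is $0$, and vanishing oscillation at a point forces the oscillation to be below any prescribed $\eps > 0$ on a suitable closed neighbourhood of that point.

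First I would build a nested sequence of closed intervals $I_{0} \supseteq I_{1} \supseteq I_{2} \supseteq \cdots$ with each $I_{n+1}$ contained in the \emph{interior} of $I_{n}$, alternately controlling $f$ and $g$. At an $f$-stage, using density of $C_f$, pick a continuity point $x \in C_f$ in the interior of the current interval; since $f$ is continuous at $x$ its oscillation there is $0$, so choose a closed subinterval $I_{2k+1}$ with $x$ in its interior on which $f$ oscillates by less than $1/2^{k}$. At the next stage, using density of $C_g$, pick $y \in C_g$ in the interior of $I_{2k+1}$ and, since $g$ is continuous at $y$, shrink to a closed subinterval $I_{2k+2}$ with $y$ interior on which $g$ oscillates by less than $1/2^{k}$. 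At each step I would additionally halve the interval length, so that the lengths tend to $0$.

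Next, by the nested interval property (completeness of $\R$ in the form of Cantor's intersection theorem), the intersection $\bigcap_{n} I_{n}$ is a single point $z$, which by construction lies in the interior of every $I_{n}$. Given any $\eps > 0$, some $f$-stage interval $I_{2k+1}$ has $f$-oscillation below $\eps$ and contains $z$ in its interior, so a genuine neighbourhood of $z$ witnesses that the oscillation of $f$ at $z$ is below $\eps$; as $\eps$ was arbitrary, this oscillation is $0$ and $f$ is continuous at $z$. The same reasoning applied to the $g$-stages shows $g$ is continuous at $z$ as well. Hence $z \in C_f \cap C_g$, contradicting $C_f \cap C_g = \emptyset$.

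The argument is elementary, so there is no single difficult step; the points requiring care are bookkeeping. I would make sure each chosen continuity point lands in the open interior, so that the next interval can be taken strictly inside and the limit point $z$ is interior to all $I_{n}$ — this is precisely what licenses the passage from ``small oscillation on $I_{n}$'' to ``small oscillation on an honest neighbourhood of $z$''. I would also check that density of $C_f$ and $C_g$ in the whole interval supplies density in every subinterval, which is what keeps the construction going indefinitely. The only ingredient beyond finite combinatorics is the completeness of $\R$.
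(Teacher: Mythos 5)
Your proof is correct, and it is precisely the ``rather elementary argument'' of Volterra that the paper alludes to but does not reproduce: the paper states Theorem \ref{VOL} purely as a historical citation to \cite{volaarde2}, offering no proof of its own, so there is nothing in the text for your argument to diverge from. Your nested-interval construction --- alternating between density of $C_f$ and $C_g$, shrinking so that each closed interval lies in the interior of the previous one, and using completeness plus the oscillation characterisation of continuity to produce a common continuity point, contradicting $C_f \cap C_g = \emptyset$ --- is the standard reconstruction of Volterra's 1881 argument and correctly avoids any appeal to the Baire category theorem.
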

Volterra then states two corollaries, of which the following is perhaps well-known in `popular mathematics' and constitutes the aforementioned negative result. 
\begin{cor}[Volterra, 1881]\label{VOLcor}
There is no $\R\di\R$ function that is continuous on $\Q$ and discontinuous on $\R\setminus\Q$. 
\end{cor}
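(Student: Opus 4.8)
The plan is to derive this corollary directly from Theorem \ref{VOL} by supplying a suitable companion function, and the natural candidate is staring at us: Thomae's function. First I would assume, towards a contradiction, that some $f:\R\di\R$ is continuous at every rational and discontinuous at every irrational. Then the continuity points of $f$ are exactly $\Q$ while its discontinuity points are exactly $\R\setminus\Q$. Since $\Q$ is dense, $f$ is \emph{pointwise discontinuous} in the sense required by Theorem \ref{VOL}.

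Next I would invoke Thomae's function $T$ from \eqref{thomae}, whose discontinuity set is exactly $\Q$ and whose continuity set is exactly $\R\setminus\Q$, as already recorded after its definition; since $\R\setminus\Q$ is dense, $T$ is likewise pointwise discontinuous. The two functions $f$ and $T$ therefore have their roles perfectly interchanged: the continuity points of $f$ (namely $\Q$) are precisely the discontinuity points of $T$, and the discontinuity points of $f$ (namely $\R\setminus\Q$) are precisely the continuity points of $T$. This is exactly the configuration that Theorem \ref{VOL} forbids, so no such $f$ can exist, which is the claim.

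The main (and essentially only) obstacle is identifying the correct partner function, and Thomae's $T$ is the canonical choice precisely because its continuity behaviour is the mirror image of the hypothetical $f$. A secondary bookkeeping point concerns domains: Theorem \ref{VOL} is phrased for functions on an interval, whereas the corollary speaks of functions on all of $\R$. To apply the theorem verbatim I would fix any interval $I$ and note that the restrictions of $f$ and of $T$ to $I$ keep the stated continuity and density properties at the interior points of $I$, so that the hypotheses of Theorem \ref{VOL} hold on $I$ and the contradiction follows there as well.
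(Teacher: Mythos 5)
Your proof is correct and is exactly the derivation the paper intends: the corollary is presented as an immediate consequence of Theorem \ref{VOL}, with Thomae's function $T$ (whose continuity set is $\R\setminus\Q$ and discontinuity set is $\Q$, as recorded right after \eqref{thomae}) serving as the partner function whose continuity behaviour mirrors that of the hypothetical $f$. The paper leaves this argument implicit, and your domain bookkeeping is the only point needing care---choosing the interval with rational endpoints makes the continuity/discontinuity correspondence hold at every point of the interval, endpoints included, so Theorem \ref{VOL} applies verbatim.
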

Thirdly, we shall study Volterra's theorem and corollary restricted to Baire 1 functions (see Section \ref{cdef}). 
The latter kind of functions are automatically `pointwise discontinuous' in the sense of Volterra.

\medskip

Fourth,  Volterra's results from \cite{volaarde2} are generalised in \cites{volterraplus,gaud}.  
The following theorem is immediate from these generalisations. 
\begin{thm}\label{dorki}
For any countable dense set $D\subset [0,1]$ and $f:[0,1]\di \R$, either $f$ is discontinuous at some point in $D$ or continuous at some point in $\R\setminus D$. 
\end{thm}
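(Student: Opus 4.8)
The plan is to prove the contrapositive: assuming that $f$ is continuous at \emph{every} point of $D$, I will produce a point $x^{*}\in[0,1]\setminus D$ at which $f$ is also continuous. This is exactly Volterra's elementary nested-interval strategy, and it has the virtue of avoiding any direct appeal to the Baire category theorem, in keeping with the historical remarks above. The two ingredients are the oscillation function $\osc(f,\cdot)$ (Def.\ \ref{oscf}) and the nested-interval theorem.

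First I would fix an enumeration $D=\{d_{n}:n\ge 1\}$ and, starting from $I_{0}:=[0,1]$, build a decreasing sequence of non-degenerate closed intervals $I_{0}\supset I_{1}\supset I_{2}\supset\cdots$ inside $[0,1]$ such that for each $n\ge 1$: (i) $I_{n}$ lies in the interior of $I_{n-1}$; (ii) $\osc(f,I_{n})<1/n$; and (iii) $d_{n}\notin I_{n}$. The inductive step is the heart of the argument. Given the non-degenerate interval $I_{n-1}$, density of $D$ yields a point $p\in D$ in the interior of $I_{n-1}$; since $f$ is continuous at $p$, there is an open neighbourhood $U$ of $p$, contained in the interior of $I_{n-1}$, on which $f$ oscillates by less than $1/n$. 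Inside the open interval $U$ I then choose a non-degenerate closed subinterval $I_{n}$ omitting the single point $d_{n}$ (take a small closed interval lying in $U$ on one side of $d_{n}$), which simultaneously secures (i)--(iii).

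Finally, the nested-interval theorem (completeness of $\R$) gives a point $x^{*}\in\bigcap_{n}I_{n}$. By (i), $x^{*}$ lies in the interior of every $I_{n}$, so each $I_{n}$ is a genuine neighbourhood of $x^{*}$; combined with (ii) this forces the oscillation of $f$ at $x^{*}$ to vanish, i.e.\ $f$ is continuous at $x^{*}$. By (iii), $x^{*}\neq d_{n}$ for every $n\ge 1$, so $x^{*}\in[0,1]\setminus D$, which establishes the contrapositive and hence the theorem. The step I expect to require the most care is coordinating the two competing demands in the inductive construction: shrinking the oscillation, which relies on continuity at the chosen point $p\in D$, while simultaneously dodging the prescribed point $d_{n}$ and keeping the intervals strictly nested, so that the limiting point is at once a continuity point of $f$ and provably outside the countable set $D$.
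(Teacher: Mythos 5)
Your proof is correct: the contrapositive is the right one, the inductive step (shrinking oscillation below $1/n$ via continuity at a point of $D$ inside the previous interval, while excluding $d_{n}$ and keeping the closed intervals strictly nested) is sound, and the limit point of the nested intervals is a continuity point of $f$ lying outside $D$. The paper itself offers no argument for Theorem \ref{dorki} beyond citing generalisations of Volterra's work, and your nested-interval construction is exactly the elementary Volterra-style argument those cited sources use, so you have in effect supplied the proof the paper delegates to its references.
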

Perhaps surprisingly, this generalisation is computationally equivalent to the original (both restricted to Baire 1 functions).  

\medskip

Fifth, the Baire category theorem for the real line was first proved by Osgood (\cite{fosgood}) and later by Baire (\cite{beren2}) in a more general setting.
\begin{thm}[Baire category theorem]
If $ (O_n)_{n \in \N}$ is a sequence of dense open sets of reals, then 
$ \bigcap_{n \in\N } O_n$ is non-empty.
\end{thm}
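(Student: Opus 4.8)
The plan is to use the classical nested-interval argument, which builds a single real number lying in every $O_n$ by successive refinement. The driving observation is that, since each $O_n$ is open and dense, intersecting it with any nonempty open interval again yields a nonempty open set; this is exactly what keeps the construction alive at every stage.

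First I would fix an arbitrary bounded open interval $I_0 = (a_0, b_0)$. Since $O_0$ is dense and open, the set $O_0 \cap I_0$ is nonempty and open, so it contains a closed interval $[a_1, b_1]$ of length at most $1$ with $[a_1,b_1] \subset O_0 \cap I_0$. Proceeding by induction, given the open interval $(a_n, b_n)$ I would invoke the density and openness of $O_n$ to select a closed interval $[a_{n+1}, b_{n+1}]$ of length at most $2^{-n}$ satisfying $[a_{n+1}, b_{n+1}] \subset O_n \cap (a_n, b_n)$. This yields a decreasing sequence of nonempty closed intervals whose lengths tend to $0$.

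Next I would appeal to the completeness of $\R$: the left endpoints $(a_n)_{n\in\N}$ form a bounded monotone (hence Cauchy) sequence converging to some $x$, and the nested-interval property gives $x \in [a_{n+1}, b_{n+1}]$ for every $n$. Since $[a_{n+1}, b_{n+1}] \subset O_n$ by construction, we get $x \in O_n$ for each $n$, whence $x \in \bigcap_{n \in \N} O_n$ and the intersection is nonempty. The stronger density conclusion follows by running the same argument with $I_0$ replaced by an arbitrary open interval, which locates the common point wherever desired.

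The one point demanding care is the inductive selection: at each stage the chosen closed interval must have its closure strictly inside the previous open interval and inside $O_n$, so that openness is available for the next round, while the lengths are simultaneously forced to zero to pin down a unique limit. From the computational standpoint of this paper the real difficulty is not this classical argument but the \emph{effectivity} of the selection, namely extracting the endpoints $(a_{n+1}, b_{n+1})$ as a function of the data representing $(O_n)_{n\in\N}$; this is precisely what a Baire realiser as in item \eqref{don1} must accomplish, and it is what makes the operation hard to compute relative to the usual comprehension hierarchy.
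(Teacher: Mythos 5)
Your proof is correct: the nested-interval construction, with each chosen closed interval contained in the previous open one and with lengths forced to zero, is the standard argument, and every step (density giving nonemptiness, openness giving room for the next closed interval, completeness giving the limit point) goes through. Note that the paper states this theorem only as classical background, citing Osgood and Baire, and gives no proof of its own; your argument is precisely the ``usual'' proof the paper alludes to elsewhere (e.g.\ the constructive proof from Bishop invoked in the proof of Theorem 3.4), and your closing observation that the real issue is the effectivity of the interval selection is exactly the paper's point of departure.
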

Based on this theorem, Dag Normann and the author have introduced the following in \cite{dagsamVII}.  
\bdefi\label{pip}
A Baire realiser $\xi: \big(\N\di (\R\di \N)\big)\di \R$ is a functional such that if $ (O_n)_{n \in \N}$ is a sequence of dense open sets of reals, $\xi(\lambda n. O_{n})\in \cap_{n\in \N}O_{n}$.
\edefi
We shall obtain a number of computational equivalences (Definition \ref{spec}) for Baire realisers in Section \ref{main3}. 
To this end, we will need some preliminaries and definitions, as in Section \ref{kelim}.

\medskip

Finally, in light of our definition of open set in Definition \ref{char}, our study of the Baire category theorem (and the same for \cite{dagsamVII})
is based on a most general definition of open set, i.e.\ no additional (computational) information is given.  Besides the intrinsic interest of such an investigation,
there is a deeper reason, as discussed in Section \ref{dichtbij}.  In a nutshell, in the study of Baire 1 functions, one readily encounters open sets `in the wild'
that do not come with any additional (computational) information.  In fact, finding such additional (computational) information turns out to be at least as hard 
as finding a Baire realiser.

%
%


\subsection{Preliminaries and definitions}\label{kelim}
We briefly introduce Kleene's \emph{higher-order computability theory} in Section~\ref{prelim}.
We introduce some essential axioms (Section~\ref{lll}) and definitions (Section~\ref{cdef}).  A full introduction may be found in e.g.\ \cite{dagsamX}*{\S2}.
Since Kleene's computability theory borrows heavily from type theory, we shall often use common notations from the latter; for instance, the natural numbers are type $0$ objects, denoted $n^{0}$ or $n\in \N$.  
Similarly, elements of Baire space are type $1$ objects, denoted $f\in \N^{\N}$ or $f^{1}$.  Mappings from Baire space $\N^{\N}$ to $\N$ are denoted $Y:\N^{\N}\di \N$ or $Y^{2}$. 
An overview of this kind of notations is in Section \ref{appendisch} and the general literature (see in particular \cite{longmann}). 

\subsubsection{Kleene's computability theory}\label{prelim}
Our main results are in computability theory and we make our notion of `computability' precise as follows.  
\begin{enumerate}
\item[(I)] We adopt $\ZFC$, i.e.\ Zermelo-Fraenkel set theory with the Axiom of Choice, as the official metatheory for all results, unless explicitly stated otherwise.
\item[(II)] We adopt Kleene's notion of \emph{higher-order computation} as given by his nine clauses S1-S9 (see \cite{longmann}*{Ch.\ 5} or \cite{kleeneS1S9}) as our official notion of `computable' involving total objects.
\end{enumerate}
We mention that S1-S8 are rather basic and merely introduce a kind of higher-order primitive recursion with higher-order parameters. 
The real power comes from S9, which essentially hard-codes the \emph{recursion theorem} for S1-S9-computability in an ad hoc way.  
By contrast, the recursion theorem for Turing machines is derived from first principles in \cite{zweer}.

\medskip

On a historical note, it is part of the folklore of computability theory that many have tried (and failed) to formulate models of computation for objects of all finite type and in which one derives the recursion theorem in a natural way.  For this reason, Kleene ultimately introduced S1-S9, which 
were initially criticised for their aforementioned ad hoc nature, but eventually received general acceptance.  

\medskip

Now, Dag Normann and the author have introduced a new computational model based on the lambda calculus in \cite{dagsamXIII} with the following properties:
\begin{itemize}
\item S1-S8 is included while the `ad hoc' scheme S9 is replaced by more natural (least) fixed point operators,
\item the new model exactly captures S1-S9 computability for total objects,
\item the new model accommodates `computing with partial objects',
\item the new model is more modular than S1-S9 in that sub-models are readily obtained by leaving out certain fixed point operators.
\end{itemize}
We refer to \cites{longmann, dagsamXIII} for a thorough overview of higher-order computability theory.
We do mention the distinction between `normal' and `non-normal' functionals  based on the following definition from \cite{longmann}*{\S5.4}. 
We only make use of $\exists^{n}$ for $n=2,3$, as defined in Section \ref{lll}.
\bdefi\label{norma}
For $n\geq 2$, a functional of type $n$ is called \emph{normal} if it computes Kleene's $\exists^{n}$ following S1-S9, and \emph{non-normal} otherwise.  
\edefi
\noindent
It is a historical fact that higher-order computability theory, based on Kleene's S1-S9 schemes, has focused primarily on the world of \emph{normal} functionals; this opinion can be found \cite{longmann}*{\S5.4}.  
Nonetheless, we have previously studied the computational properties of new \emph{non-normal} functionals, namely those that compute the objects claimed to exist by:
\begin{itemize}
\item covering theorems due to Heine-Borel, Vitali, and Lindel\"of (\cites{dagsam, dagsamII, dagsamVI}),
\item the Baire category theorem (\cite{dagsamVII}),
\item local-global principles like \emph{Pincherle's theorem} (\cite{dagsamV}),
\item weak fragments of the Axiom of (countable) Choice (\cite{dagsamIX}),
\item the uncountability of $\R$ and the Bolzano-Weierstrass theorem for countable sets in Cantor space (\cites{dagsamX, dagsamXI}),
\item the Jordan decomposition theorem and related results (\cites{dagsamXII, dagsamXIII}).
\end{itemize}
In this paper, we greatly extend the study of the Baire category theorem mentioned in the second item; the operations sketched in Section \ref{mintro} are all non-normal, in that they do not compute Kleene's $\exists^{2}$ from Section \ref{lll}.

\medskip

Finally, we have obtained many computational equivalences in \cite{dagsamXIII, samwollic22}, mostly related to the Jordan decomposition theorem and the uncountability of $\R$.
With considerable effort, we even obtained terms of G\"odel's $T$ witnessing (some of) these equivalences (see e.g.\ \cite{dagsamXIII}*{Thm.~4.14}).  
Many of the below equivalences go through in tame fragments G\"odel's $T$ extended with Feferman's $\mu^{2}$ from Section~\ref{lll}.

\subsubsection{Some comprehension functionals}\label{lll}
In Turing-style computability theory, computational hardness is measured in terms of where the oracle set fits in the well-known comprehension hierarchy.  
For this reason, we introduce some axioms and functionals related to \emph{higher-order comprehension} in this section.
We are mostly dealing with \emph{conventional} comprehension here, i.e.\ only parameters over $\N$ and $\N^{\N}$ are allowed in formula classes like $\Pi_{k}^{1}$ and $\Sigma_{k}^{1}$.  

\medskip

First of all, the functional $\varphi^{2}$, also called \emph{Kleene's quantifier $\exists^{2}$}, as in $(\exists^{2})$ is clearly discontinuous at $f=11\dots$; in fact, $\exists^{2}$ is (computationally) equivalent to the existence of $F:\R\di\R$ such that $F(x)=1$ if $x>_{\R}0$, and $0$ otherwise via Grilliot's trick (see \cite{kohlenbach2}*{\S3}).
\be\label{muk}\tag{$\exists^{2}$}
(\exists \varphi^{2}\leq_{2}1)(\forall f^{1})\big[(\exists n)(f(n)=0) \asa \varphi(f)=0    \big]. 
\ee
Related to $(\exists^{2})$, the functional $\mu^{2}$ in $(\mu^{2})$ is called \emph{Feferman's $\mu$} (\cite{avi2}).
\begin{align}\label{mu}\tag{$\mu^{2}$}
(\exists \mu^{2})(\forall f^{1})\big(\big[ (\exists n)(f(n)=0) \di [f(\mu(f))=0&\wedge (\forall i<\mu(f))(f(i)\ne 0) \big]\\
& \wedge [ (\forall n)(f(n)\ne0)\di   \mu(f)=0] \big). \notag
\end{align}
We have $(\exists^{2})\asa (\mu^{2})$ over Kohlenbach's base theory (\cite{kohlenbach2}), while $\exists^{2}$ and $\mu^{2}$ are also computationally equivalent.  
Hilbert and Bernays formalise considerable swaths of mathematics using only $\mu^{2}$ in \cite{hillebilly2}*{Supplement IV}.

\medskip
\noindent
Secondly, the functional $\SS^{2}$ in $(\SS^{2})$ is called \emph{the Suslin functional} (\cite{kohlenbach2}).
\be\tag{$\SS^{2}$}
(\exists\SS^{2}\leq_{2}1)(\forall f^{1})\big[  (\exists g^{1})(\forall n^{0})(f(\overline{g}n)=0)\asa \SS(f)=0  \big].
\ee
By definition, the Suslin functional $\SS^{2}$ can decide whether a $\Sigma_{1}^{1}$-formula as in the left-hand side of $(\SS^{2})$ is true or false.   
We similarly define the functional $\SS_{k}^{2}$ which decides the truth or falsity of $\Sigma_{k}^{1}$-formulas.
%
We note that the Feferman-Sieg operators $\nu_{n}$ from \cite{boekskeopendoen}*{p.\ 129} are essentially $\SS_{n}^{2}$ strengthened to return a witness (if existant) to the $\Sigma_{n}^{1}$-formula at hand.  

\medskip

\noindent
Thirdly, the functional $E^{3}$ clearly computes $\exists^{2}$ and $\SS_{k}^{2}$ for any $k\in \N$:
\be\tag{$\exists^{3}$}
(\exists E^{3}\leq_{3}1)(\forall Y^{2})\big[  (\exists f^{1})(Y(f)=0)\asa E(Y)=0  \big].
\ee
The functional from $(\exists^{3})$ is also called \emph{Kleene's quantifier $\exists^{3}$}, and we use the same -by now obvious- convention for other functionals.  
Hilbert and Bernays introduce a functional $\nu^{3}$ in \cite{hillebilly2}*{Supplement IV}, which is similar to $\exists^{3}$.

\medskip

In conclusion, the operations sketched in Section \ref{mintro} are computable in $\exists^{3}$ but not in any $\SS_{k}^{2}$, as noted in Footnote \ref{klank} and which immediately follows from \cite{dagsamVII}*{\S6}.  
Many non-normal functionals exhibit the same `computational hardness' and we merely view this as support for the development of a separate scale for classifying non-normal functionals.    

\subsubsection{Some definitions}\label{cdef}
We introduce some definitions needed in the below, mostly stemming from mainstream mathematics.
We note that subsets of $\R$ are given by their characteristic functions (Definition \ref{char}), where the latter are common in measure and probability theory.
In this paper, `continuity' refers to the usual `epsilon-delta' definition, well-known from the literature. 

\medskip
\noindent
Zeroth of all, we make use the usual definition of (open) set, where $B(x, r)$ is the open ball with radius $r>0$ centred at $x\in \R$.
\bdefi[Set]\label{char}~
\begin{itemize}
\item Subsets $A\subset \R$ are given by its characteristic function $F_{A}:\R\di \{0,1\}$, i.e.\ we write $x\in A$ for $ F_{A}(x)=1$ for all $x\in \R$.
\item A subset $O\subset \R$ is \emph{open} in case $x\in O$ implies that there is $k\in \N$ such that $B(x, \frac{1}{2^{k}})\subset O$.
\item A subset $C\subset \R$ is \emph{closed} if the complement $\R\setminus C$ is open. 
\end{itemize}
\edefi
\noindent
The reader will find more motivation for our definition of open set in Section~\ref{dichtbij}.

\medskip

\noindent
First of all, we study the following continuity notions, in part due to Baire (\cite{beren2}). 
\bdefi[Weak continuity]\label{flung} 
For $f:[0,1]\di \R$, we have the following:
\begin{itemize}
\item $f$ is \emph{upper semi-continuous} at $x_{0}\in [0,1]$ if $f(x_{0})\geq_{\R}\lim\sup_{x\di x_{0}} f(x)$,
\item $f$ is \emph{lower semi-continuous} at $x_{0}\in [0,1]$ if $f(x_{0})\leq_{\R}\lim\inf_{x\di x_{0}} f(x)$,
\item $f$ is \emph{quasi-continuous} at $x_{0}\in [0, 1]$ if for $ \epsilon > 0$ and an open neighbourhood $U$ of $x_{0}$, 
there is a non-empty open ${ G\subset U}$ with $(\forall x\in G) (|f(x_{0})-f(x)|<\eps)$.
\item $f$ is \emph{regulated} if for every $x_{0}$ in the domain, the `left' and `right' limit $f(x_{0}-)=\lim_{x\di x_{0}-}f(x)$ and $f(x_{0}+)=\lim_{x\di x_{0}+}f(x)$ exist.  
\end{itemize}
In case the weak continuity notion is satisfied at each point of the domain, the associated function satisfies the italicised notion. 
\edefi
Secondly, Baire introduces the hierarchy of Baire classes in \cite{beren2}.  We shall only need the first Baire class, defined as follows.
\bdefi[Baire 1 function]
A function $f:\R\di \R$ is \emph{Baire 1} if there exists a sequence $(f_{n})_{n\in \N}$ of continuous $\R\di \R$-functions such that $f(x)=\lim_{n\di\infty}f_{n}(x)$ for all $x\in \R$
\edefi
Thirdly, the following sets are often crucial in proofs relating to discontinuous functions, as can be observed in e.g.\ \cite{voordedorst}*{Thm.\ 0.36}.
\bdefi
The sets $C_{f}$ and $D_{f}$ respectively gather the points where $f:\R\di \R$ is continuous and discontinuous.
\edefi
One problem with the sets $C_{f}, D_{f}$ is that the definition of continuity involves quantifiers over $\R$.  
In general, deciding whether a given $\R\di \R$-function is continuous at a given real, is as hard as $\exists^{3}$ from Section \ref{lll}.
For these reasons, the sets $C_{f}, D_{f}$ do exist, but are not available as inputs for algorithms in general.  
A solution is discussed in Section \ref{prebaire}.

\medskip

Fourth, we introduce two notions to be found already in e.g.\ the work of Volterra, Smith, and Hankel (\cites{hankelwoot, snutg, volaarde2}).
\bdefi~
\begin{itemize}
\item A set $A\subset \R$ is \emph{dense} in $B\subset \R$ if for $k\in \N,b\in B$, there is $a\in A$ with $|a-b|<\frac{1}{2^{k}}$.
\item A function $f:\R\di \R$ is \emph{pointwise discontinuous} in case $C_{f}$ is dense in $\R$.
\item A set $A\subset \R$ is \emph{nowhere dense} \(in $ \R$\) if $A$ is not dense in any open interval.  
\end{itemize}
\edefi

Fifth, we need the `intermediate value property', also called `Darboux property'.  
\bdefi[Darboux property] Let $f:[0,1]\di \R$ be given. 
\begin{itemize}
\item A real $y\in \R$ is a left \(resp.\ right\) \emph{cluster value} of $f$ at $x\in [0,1]$ if there is $(x_{n})_{n\in \N}$ such that $y=\lim_{n\di \infty} f(x_{n})$ and $x=\lim_{n\di \infty}x_{n}$ and $(\forall n\in \N)(x_{n}\leq x)$ \(resp.\ $(\forall n\in \N)(x_{n}\geq x)$\).  
\item A point $x\in [0,1]$ is a \emph{Darboux point} of $f:[0,1]\di \R$ if for any $\delta>0$ and any left \(resp.\ right\) cluster value $y$ of $f$ at $x$ and $z\in \R$ strictly between $y$ and $f(x)$, there is $w\in (x-\delta, x)$ \(resp.\ $w\in ( x, x+\delta)$\) such that $f(w)=y$.   
\end{itemize}
\edefi
\noindent
By definition, a point of continuity is also a Darboux point, but not vice versa.

\section{Baire category theorem and computational equivalences}\label{main3}
In this section, we obtain the computational equivalences sketched in Section \ref{mintro} involving the Baire category theorem and basic properties of {Baire 1} and {pointwise discontinuous} functions (see Section \ref{cdef} for the latter).  
To avoid issues of the representation of real numbers, we will always assume $\mu^{2}$ or $\exists^{2}$ from Section \ref{lll}.

\subsection{Preliminaries}\label{prebaire}
As discussed in Section \ref{mintro}, we shall study Baire 1 functions that are given together with their \emph{oscillation function} (see Definition \ref{oscf} for the latter).  
We briefly discuss and motivate this construct in this section. 

\medskip

First of all, the study of regulated functions in \cites{dagsamXI, dagsamXII, dagsamXIII} is 
really only possible thanks to the associated left- and right limits (see Definition \ref{flung}) \emph{and} the fact that the latter are computable in $\exists^{2}$.  
Indeed, for regulated $f:\R\di \R$, the formula 
\be\label{figo}\tag{\textup{\textsf{C}}}
\text{\emph{ $f$ is continuous at a given real $x\in \R$}}
\ee
involves quantifiers over $\R$ but is equivalent to the \emph{arithmetical} formula $f(x+)=f(x)=f(x-)$.  
In this light, we can define the set $D_{f}$ of discontinuity points of $f$ -using only $\exists^{2}$- and proceed with the usual (textbook) proofs.  
%
Now, we would like to use an analogous approach for the study of pointwise discontinuous or Baire 1 functions.  
To this end, we consider the \emph{oscillation function} defined as follows.
\bdefi[Oscillation function]\label{oscf}
For any $f:\R\di \R$, the associated \emph{oscillation functions} are defined as follows: $\osc_{f}([a,b]):= \sup _{{x\in [a,b]}}f(x)-\inf _{{x\in [a,b]}}f(x)$ and $\osc_{f}(x):=\lim _{k \di \infty }\osc_{f}(B(x, \frac{1}{2^{k}}) ).$
\edefi
Riemann and Hankel already considered the notion of oscillation in the context of Riemann integration (\cites{hankelwoot, rieal}).  
Our main interest in Definition \ref{oscf} is that \eqref{figo} is now equivalent to the \emph{arithmetical} formula $\osc_{f}(x)=0$.  
Hence, in the presence of $\osc_{f}$, we can again define $D_{f}$ -using nothing more than $\exists^{2}$- and proceed with the usual (textbook) proofs. 
In the below, we will (only) study Baire 1 and pointwise discontinuous functions $f:[0,1]\di \R$ that are \emph{given together with} the associated oscillation function $\osc_{f}:[0,1]\di \R$.
We stress that such `constructive enrichment' is common in computability theory and constructive mathematics. 

\medskip

Secondly, we sketch the connection between Baire 1 functions and the Baire category theorem, in both directions.  
In one direction, fix a Baire 1 function $f:[0,1]\di \R$ and its oscillation function $\osc_{f}$.  A standard textbook technique is to decompose the set $D_{f}=\{ x\in [0,1]: \osc_{f}(x)>0  \}$ as the union of the closed sets
\be\label{kok}\textstyle
D_{k}:=\{ x\in [0,1]: \osc_{f}(x)\geq \frac{1}{2^{k}}  \} \textup{  for all $k\in \N$.}
\ee
The complement $O_{n}:= [0,1]\setminus D_{k}$ can be shown to be open and dense, as required for the antecedent of the Baire category theorem.  
This connection also goes in the other direction as follows: fix a sequence of dense and open sets $(O_{n})_{n\in \N}$ in the unit interval, define $X_{n}:= [0,1]\setminus O_{n}$ and consider 
the following function $h:[0,1]\di \R$:
\be\label{mopi}\tag{\textsf{\textup{H}}}
h(x):=
\begin{cases}
0 & x\not \in \cup_{m\in \N}X_{m} \\
\frac{1}{2^{n+1}} &  x\in X_{n} \textup{ and $n$ is the least such number}
\end{cases},
\ee
The function $h$ may be found in the literature and is Baire 1 (\cite{myerson}*{p.\ 238}).  
In conclusion, the Baire category theorem seems intimately connected to Baire 1 functions (in both directions), \emph{assuming} we have access to \eqref{kok}, which is why
we assume the oscillation $\osc_{f}$ to be given.    

\medskip
\noindent
Thirdly, we list basic facts about Baire 1 and pointwise discontinuous functions.
\begin{thm}\label{dorn}~
\begin{itemize}
\item Upper semi-continuous functions are Baire 1; the latter are pointwise discontinuous. 
\item Any $f:[0,1]\di \R$ is upper semi-continuous iff for any $a\in \R$, $f^{-1}([a, +\infty))$ is closed. 
\item For \emph{any} $\R\di \R$-function, the set $D_{f}$ is $\F_{\sigma}$, i.e.\ the union over $\N$ of closed sets.
\item For a sequence of closed $(X_{n})_{n\in \N}$, the function $h$ in \eqref{mopi} is Baire 1.   
\item Characteristic functions of subsets of the Cantor set (and variations) are pointwise discontinuous, but need not be Borel or Riemann integrable.
\item The class of bounded Baire 1 functions $\mathscr{B}_{1}$ is the union of all `small' Baire 1 classes $\mathscr{B}_{1}^{\xi}$ for $\xi<\omega_{1}$ \(see \cite{vuilekech} for the definition of the latter\).  
\end{itemize}
\end{thm}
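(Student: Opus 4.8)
The plan is to treat the six items largely independently, reusing throughout the oscillation machinery of Definition~\ref{oscf} and the level-set description of semi-continuity. I would open with the second item, since it is the one purely definitional statement and it feeds into the first: unwinding Definition~\ref{flung}, $f$ is upper semi-continuous at every point iff $\{x:f(x)<a\}$ is open for every $a\in\R$, and passing to complements this says exactly that $f^{-1}([a,+\infty))$ is closed for every $a$. Each direction is a one-line $\eps$-$\delta$ argument. For the first half of the first item, that an upper semi-continuous $f:[0,1]\di\R$ is Baire~1, I would use the Lipschitz sup-convolution $f_{n}(x):=\sup_{y\in[0,1]}\big(f(y)-n|x-y|\big)$: each $f_{n}$ is $n$-Lipschitz, hence continuous, the sequence is non-increasing, and $\lim_{n}f_{n}(x)=f(x)$ precisely because $f$ is upper semi-continuous (here one uses that $f$ is bounded above, which is automatic on the compact domain).

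The second half of the first item, that every Baire~1 function is pointwise discontinuous, is the genuinely substantive classical fact, and it is where the Baire category theorem enters. Writing $f=\lim_{n}f_{n}$ with $f_{n}$ continuous, I would show that each $D_{k}=\{x:\osc_{f}(x)\geq\tfrac{1}{2^{k}}\}$ is nowhere dense: on any interval $I$ the closed sets $E_{m}=\{x\in I: |f_{p}(x)-f_{q}(x)|\leq\eps \text{ for all } p,q\geq m\}$ cover $I$, so by the Baire category theorem some $E_{m}$ has non-empty interior, on which $f$ has oscillation below $\eps$; hence $D_{k}$ meets no interval densely. Then $D_{f}=\bigcup_{k}D_{k}$ is meagre and $C_{f}$ is dense. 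The third item is then immediate and in fact records \eqref{kok}: the oscillation function is upper semi-continuous, so by the second item each $D_{k}$ is closed, whence $D_{f}=\bigcup_{k}D_{k}$ is $\F_{\sigma}$.

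For the fourth item I would avoid exhibiting continuous approximants directly and instead invoke the Lebesgue--Hausdorff characterisation that a real-valued function on a metric space is Baire~1 iff the preimage of every open ray is $\F_{\sigma}$. A short computation with the level sets of $h$ from \eqref{mopi} suffices: for $a>0$ only finitely many values $\tfrac{1}{2^{n+1}}$ exceed $a$, so $\{h>a\}$ is a finite union of sets $X_{n}\cap(\bigcup_{m<n}X_{m})^{c}$, each closed intersect open and hence $\F_{\sigma}$, while $\{h<a\}$ simplifies to the complement of a finite union of the closed $X_{n}$ and is therefore open; the boundary case $a=0$ gives $\{h>0\}=\bigcup_{n}X_{n}$ and the negative range is trivial. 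This reproves the assertion attributed to \cite{myerson}. For the fifth item, if $\mathcal{C}$ denotes the Cantor set (or a fat, positive-measure Cantor-like set $K$) and $S\subseteq\mathcal{C}$, then $\chi_{S}$ is continuous at every point of the dense open set $\R\setminus\mathcal{C}$, so $C_{\chi_{S}}$ is dense and $\chi_{S}$ is pointwise discontinuous; a cardinality count ($2^{2^{\aleph_{0}}}$ subsets of $\mathcal{C}$ against only $2^{\aleph_{0}}$ Borel sets) yields non-Borel $S$, and taking $S=K$ a fat Cantor set makes $D_{\chi_{K}}=K$ of positive measure, so $\chi_{K}$ fails the Lebesgue criterion for Riemann integrability. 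The sixth item I would settle by citation: by \cite{vuilekech} the small Baire classes $\mathscr{B}_{1}^{\xi}$ are defined through a rank on bounded Baire~1 functions, and a well-foundedness argument shows every such function has countable rank, giving $\mathscr{B}_{1}=\bigcup_{\xi<\omega_{1}}\mathscr{B}_{1}^{\xi}$.

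The main obstacle is the second half of the first item: it is the only step that genuinely invokes the Baire category theorem, it requires the meagreness estimate on the $D_{k}$, and it is precisely the connection that the rest of the paper exploits. Secondary care is needed in the level-set bookkeeping for item four and in correctly passing to a \emph{fat} Cantor set in item five, since over the standard measure-zero Cantor set every such $\chi_{S}$ would in fact remain Riemann integrable.
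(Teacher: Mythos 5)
Your proposal is correct, but a direct comparison is slightly moot: the paper does not prove Theorem \ref{dorn} at all --- its ``proof'' is a one-line pointer to the classical literature (\cite{oxi}*{\S 7, p.\ 31--33} and \cites{beren2, myerson, vuilekech}). What you have written out is, in essence, the content of those citations, and it checks out. Your category argument (covering an arbitrary interval $I$ by the closed sets $E_{m}=\{x\in I:|f_{p}(x)-f_{q}(x)|\leq\eps,\ \forall p,q\geq m\}$, extracting one with interior, and concluding each $D_{k}$ is nowhere dense) is precisely Oxtoby's proof that $D_{f}$ is meagre for Baire 1 functions; the sup-convolution $f_{n}(x)=\sup_{y}(f(y)-n|x-y|)$ and the level-set characterisation of semi-continuity are the standard textbook arguments, and your level-set computation for $h$ is correct: $\{h\geq a\}=\bigcup_{n\leq N}X_{n}$ is closed for $a>0$, and $\{h>a\}$ is a finite union of sets of the form closed-intersect-open, hence $\F_{\sigma}$. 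Two remarks on where you diverge from or improve upon the cited sources. First, for item four you route through the Lebesgue--Hausdorff characterisation (Baire 1 iff preimages of open sets are $\F_{\sigma}$, valid on $[0,1]$ since it is Polish) rather than exhibiting continuous approximants directly as in \cite{myerson}; this is legitimate, though you are then leaning on a classical theorem whose hard direction is substantially deeper than the statement being proved, so the trade-off is clean bookkeeping against a heavier imported tool. Second, your closing observation about item five is a genuine and necessary sharpening of the paper's loose phrase ``(and variations)'': for subsets of the standard measure-zero Cantor set the characteristic function is \emph{always} Riemann integrable by Lebesgue's criterion (its discontinuity set lies inside the Cantor set), so the non-integrable example must use a fat Cantor set, exactly as you say, while the non-Borel example already works over the standard one by the cardinality count. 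Item six you settle by citing \cite{vuilekech}, which is all the paper does as well. No gaps.
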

\begin{proof}
Proofs may be found in e.g.\ \cite{oxi}*{\S7, p.\ 31-33} and \cites{beren2, myerson, vuilekech}.
\end{proof}
We will tacitly use Theorem \ref{dorn} in the below.  We will need one additional property of $h:[0,1]\di \R$ as in \eqref{mopi}, which we could not find in the literature. 
\begin{thm}\label{fronk}
Let $(X_{n})_{n\in \N}$ be a sequence of closed and nowhere dense sets and let $h:[0,1]\di \R$ be as in \eqref{mopi}.  
Then $\osc_{h}:[0,1]\di \R$ is computable in $\exists^{2}$. 
\end{thm}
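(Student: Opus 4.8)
The plan is to show that $\osc_{h}$ coincides \emph{exactly} with $h$ itself, after which the computability claim reduces to a routine unbounded search. For an open interval $I$, write $n_{0}(I):=\min\{ n\in \N: X_{n}\cap I\neq \emptyset\}$ (with $n_{0}(I)=\infty$ if no such $n$ exists), and recall that $B(x,\frac{1}{2^{k}})$ is the open interval $(x-\frac{1}{2^{k}}, x+\frac{1}{2^{k}})$.

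First I would compute the supremum and infimum of $h$ over a ball. Since $h\geq 0$, the infimum is nonnegative; to see it vanishes, fix $\eps>0$ and $N$ with $\frac{1}{2^{N+1}}<\eps$. As $\cup_{m\leq N}X_{m}$ is a finite union of closed nowhere dense sets, it is itself closed and nowhere dense, hence cannot contain the nonempty open set $B(x,\frac{1}{2^{k}})$; any $y\in B(x,\frac{1}{2^{k}})\setminus \cup_{m\leq N}X_{m}$ satisfies $h(y)<\frac{1}{2^{N+1}}<\eps$, so $\inf_{y\in B(x,1/2^{k})}h(y)=0$. Crucially, this uses only that \emph{finite} unions of nowhere dense sets are nowhere dense, so no appeal to the Baire category theorem is needed. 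For the supremum, a point $y\in X_{n_{0}(I)}\cap I$ has least index exactly $n_{0}(I)$ and hence $h(y)=\frac{1}{2^{n_{0}(I)+1}}$, while no point of $I$ can have a larger value (that would require an index below $n_{0}(I)$, whose set misses $I$); thus $\sup_{y\in I}h(y)=\frac{1}{2^{n_{0}(I)+1}}$ and $\osc_{h}(I)=\frac{1}{2^{n_{0}(I)+1}}$.

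Next I would pass to the limit $k\to\infty$, which is where the closedness hypothesis does the real work. The integers $n_{0}(B(x,\frac{1}{2^{k}}))$ are nondecreasing in $k$, and I claim their limit is $\min\{n: x\in X_{n}\}$. Indeed, if $x\in X_{n^{*}}$ with $n^{*}$ least, then $x\in X_{n^{*}}\cap B(x,\frac{1}{2^{k}})$ forces $n_{0}\leq n^{*}$ for every $k$, whereas for each $m<n^{*}$ the closedness of $X_{m}$ (so $x\notin \overline{X_{m}}=X_{m}$) gives a ball about $x$ missing $X_{m}$, making $n_{0}(B(x,\frac{1}{2^{k}}))=n^{*}$ for large $k$; hence $\osc_{h}(x)=\frac{1}{2^{n^{*}+1}}=h(x)$. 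If instead $x\notin\cup_{n}X_{n}$, then for every $N$ closedness yields a ball about $x$ disjoint from $X_{0},\dots,X_{N}$, so $n_{0}(B(x,\frac{1}{2^{k}}))\to\infty$ and $\osc_{h}(x)=0=h(x)$. In either case $\osc_{h}(x)=h(x)$, so $\osc_{h}=h$ on $[0,1]$.

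Finally, the computability is immediate: given $\exists^{2}$ (equivalently $\mu^{2}$) together with the sequence $\lambda n.\,F_{X_{n}}$, one evaluates $\osc_{h}(x)=h(x)$ by first deciding with $\exists^{2}$ whether $(\exists n)(F_{X_{n}}(x)=1)$, and, if so, using $\mu^{2}$ to return the least such $n^{*}$ and output $\frac{1}{2^{n^{*}+1}}$, outputting $0$ otherwise. I expect the main obstacle to be the limit step rather than the computability: one must observe that, although computing $\sup_{I}h$ over an interval directly would require quantifying over all reals in $I$ and is in general only $\exists^{3}$-computable, the oscillation \emph{at a point} collapses to the single pointwise value $h(x)$ precisely because each $X_{n}$ is closed, so that any lower-index set not containing $x$ is bounded away from $x$.
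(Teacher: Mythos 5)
Your proof is correct, and its heart is the same as the paper's: both arguments establish that $h$ is \emph{its own} oscillation function, $\osc_{h}=h$ on $[0,1]$, after which computability in $\exists^{2}$ (via $\mu^{2}$ and the characteristic functions of the $X_{n}$) is routine. The execution differs in two respects worth noting. First, you compute the oscillation over an interval exactly, $\osc_{h}(I)=\frac{1}{2^{n_{0}(I)+1}}$ with $n_{0}(I)$ the least index of an $X_{n}$ meeting $I$, and then obtain the pointwise value by a monotone limit, using closedness to show $n_{0}(B(x,\frac{1}{2^{k}}))$ stabilises at the least index of $x$ (or tends to $\infty$); the paper instead argues by a four-way case distinction on the values of $h(x_{0})$ and $\osc_{h}(x_{0})$, deriving contradictions. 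Second, for the infimum you invoke only that \emph{finite} unions of closed nowhere dense sets are nowhere dense, whereas the paper's case analysis appeals to the density of the full intersection $\cap_{n\in\N}Y_{n}$, i.e.\ to the Baire category theorem itself; since the metatheory is $\ZFC$ this costs nothing, but your variant is more self-contained and avoids even the appearance of circularity in a paper whose subject is Baire realisers. Your closing formula $\sup_{y\in I}h(y)=\frac{1}{2^{n_{0}(I)+1}}$ is also slightly sharper than the corresponding estimates in the paper's proof.
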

\begin{proof}
Consider $h:[0,1]\di \R$ as in \eqref{mopi} where $(X_{n})_{n\in \N}$ is a sequence of closed nowhere dense sets.  
We will show that $h$ equals $\osc_{h}$ everywhere on $[0,1]$, i.e.\ $h$ is its own oscillation function. 
To this end, we proceed by the following case distinction. 
\begin{itemize}
\item In case $h(x_{0})=0$ for some $x_{0}\in [0,1]$, then $x_{0}\in \cap_{n\in \N}Y_{n}$ where $Y_{n}:= [0,1]\setminus X_{n}$ is open.  Hence, for any $m\in \N$, there is $N\in \N$ such that $B(x_{0}, \frac{1}{2^{N}})\subset \cap_{n\leq m}Y_{n}$, as the latter intersection is open. By the definition of $\osc_{h}$, we have $\osc_{h}(x_{0})<\frac{1}{2^{m}}$ for all $m\in \N$, i.e.\ $\osc_{h}(x_{0})=h(x_{0})=0$.  
\item In case $\osc_{h}(x_{0})=0$ for some $x_{0}\in [0,1]$, we must have $x_{0}\not \in  \cup_{n\in \N}X_{n}$ and hence $h(x_{0})=0$ by definition.  Indeed, if $x_{0}\in X_{n_{0}}$, then $\osc_{h}(x_{0})\geq \frac{1}{2^{n_{0}}}$ because $\inf_{x\in B(x_{0}, \frac{1}{2^{k}})}h(x)=0$ (for any $k\in \N$) due to $\cap_{n\in \N}Y_{n}$ being dense in $[0,1]$, while of course $\sup_{x\in B(x_{0}, \frac{1}{2^{k}})}h(x)\geq h(x_{0})\geq \frac{1}{2^{n_{0}}}$.   
\item In case $h(x_{0})=\frac{1}{2^{n_{0}+1}}$ for some $x_{0}\in [0,1]$, suppose $\osc_{h}(x_{0})\ne \frac{1}{2^{n_{0}+1}}$.   Since by definition (and the previous item) $\osc_{h}(x_{0})\geq h(x_{0})=\frac{1}{2^{n_{0}+1}}$, we have $\osc_{h}(x_{0})>\frac{1}{2^{n_{0}+1}}$, implying $\osc_{h}(x_{0})\geq\frac{1}{2^{n_{0}}}$ and $n_{0}>0$.  Now, if $x_{0}\in O:= \cap_{n\leq n_{0-1}}Y_{n}$, then $B(x_{0}, \frac{1}{2^{N}})\subset O$ for $N$ large enough, as $O$ is open; by definition, the latter inclusion implies that $\osc_{h}(x_{0})\leq \frac{1}{2^{n_{0}+1}}$, a contradiction.  However, $x_{0}\not \in O$  (which is equivalent to $x_{0}\in  \cup_{n\leq n_{0}-1}X_{n}$), also leads to a contradiction as then $h(x_{0})>\frac{1}{2^{n_{0}+1}}$.  In conclusion, we have $h(x_{0})=\frac{1}{2^{n_{0}+1}}=\osc_{h}(x_{0})$. 
\item In case $\osc_{h}(x_{0})>0$ for some $x_{0}\in [0,1]$, suppose $h(x_{0})\ne \osc_{h}(x_{0})$.   
By definition (and the first item) we have $\osc_{h}(x_{0})\geq h(x_{0})>0$, implying $\osc_{h}(x_{0})>h(x_{0})=\frac{1}{2^{n_{0}+1}}$ for some $n_{0}\in \N$.  
In turn, we must have $\osc_{h}(x_{0})\geq \frac{1}{2^{n_{0}}}$ and $n_{0}>0$.
Now, if $x_{0}\in O:= \cap_{n\leq n_{0}-1}Y_{n}$, then $B(x_{0}, \frac{1}{2^{N}})\subset O$ for $N$ large enough, as $O$ is open; by definition, the latter inclusion implies that $\osc_{h}(x)\leq \frac{1}{2^{n_{0}+1}}$, a contradiction.  However, $x_{0}\not \in O$  (which is equivalent to $x_{0}\in  \cup_{n\leq n_{0}-1}X_{n}$), also leads to a contradiction as then $h(x_{0})\geq\frac{1}{2^{n_{0}}}$.  
Since both cases lead to contradiction, we have $h(x_{0})=\osc_{h}(x_{0})$. 
\end{itemize}
In conclusion, we have $h(x)=\osc_{h}(x)$ for all $x\in [0,1]$, as required.  
\end{proof}
The previous theorem is perhaps surprising: computing the oscillation function of arbitrary functions readily yields $\exists^{3}$, while the function $h$ from \eqref{mopi} comes `readily equipped' with $\osc_{h}$.

\subsection{Main results}
In this section, we establish the computational equivalences sketched in Section \ref{mintro}.  The function $h$ from Section \ref{prebaire} and Theorem \ref{fronk} play a central role. 

\medskip

In particular, we have the following theorem where we note that Volterra's original results from \cite{volaarde2} (see Section \ref{vintro}) are formulated for pointwise discontinuous functions.  
We also note that e.g.\ Dirichlet's function is in Baire class 2, i.e.\ we cannot go higher in the Baire hierarchy (without extra effort). 
\begin{thm}\label{nolabel}
Assuming $\exists^{2}$, the following are computationally equivalent.
\begin{enumerate}
 \renewcommand{\theenumi}{\alph{enumi}}
\item A Baire realiser, i.e.\ a functional that on input a sequence $(O_{n})_{n\in \N}$ of dense and open subsets of $[0,1]$, outputs $x\in \cap_{n\in \N}O_{n}$.\label{benga1}
\item A functional that on input a Baire 1 function $f:[0,1]\di \R$ and its oscillation $\osc_{f}:[0,1]\di \R$, outputs $y\in [0,1]$ where $f$ is continuous \(or quasi-continuous, or lower semi-continuous, or Darboux\).\label{benga2}
\item \(Volterra\) A functional that on input a Baire 1 function $f:[0,1]\di \R$ and its oscillation $\osc_{f}:[0,1]\di \R$, outputs either $q\in \Q\cap [0,1]$ where $f$ is discontinuous, or $x\in [0,1]\setminus \Q$ where $f$ is continuous. \label{benga3}
\item \(Volterra\) A functional that on input Baire 1 functions $f,g:[0,1]\di \R$ and their oscillation functions $\osc_{f}, \osc_{g}:[0,1]\di \R$, outputs a real $x\in [0,1]$ such that $f$ and $g$ are both continuous or both discontinuous at $x$. \label{benga4}
\item \(Baire, \cite{beren2}*{p.\ 66}\) A functional that on input a sequence of Baire 1 functions $(f_{n})_{n\in \N}$ and their oscillation functions $(\osc_{f_{n}})_{n\in \N}$, outputs a real $x\in [0,1]$ such that all $f_{n}$ are continuous at $x$. \label{benga5}
\item A functional that on input Baire 1 $f:[0,1]\di \R$ and its oscillation function $\osc_{f}$, outputs $a, b\in  [0,1]$ such that $\{ x\in [0,1]:f(a)\leq f(x)\leq f(b)\}$ is infinite.\label{benga6}
\item The previous items with `Baire 1' generalised to `pointwise discontinuous'.\label{bengafinal}
\item The previous items with `Baire 1' restricted to `upper semi-continuous'. \label{bengafinal3}
\item The previous items with `Baire 1' restricted to `small Baire class $\mathscr{B}_{1}^{\xi}$' for any countable ordinal $1\leq \xi<\omega_{1}$. \label{bengafinal4}
\end{enumerate}
\end{thm}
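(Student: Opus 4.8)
The plan is to establish the chain of computational equivalences by cycling through the items, using the Baire realiser of item \eqref{benga1} as the central hub and the function $h$ from \eqref{mopi} (together with Theorem \ref{fronk}) as the main technical device for going from Baire-category-style inputs to Baire-1-function inputs and back. First I would prove $\eqref{benga1}\rightarrow\eqref{benga5}$, since \eqref{benga5} (the Baire statement about a sequence of Baire 1 functions) is the most general continuity-point assertion: given $(f_{n})_{n\in\N}$ with their oscillation functions, the set $D_{f_{n}}$ is decidable using $\exists^{2}$ via $\osc_{f_{n}}(x)=0$, so each $D_{f_{n}}$ decomposes as in \eqref{kok} into closed sets whose complements $O_{n,k}$ are open and dense; feeding the (re-indexed) sequence of these complements to the Baire realiser yields a common point of continuity. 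The implications $\eqref{benga5}\rightarrow\eqref{benga2}$, $\eqref{benga2}\rightarrow\eqref{benga3}$, and $\eqref{benga2}\rightarrow\eqref{benga6}$ are then essentially immediate specialisations or logical weakenings, since a continuity point is \emph{a fortiori} quasi-continuous, lower semi-continuous, Darboux, and (by the Darboux-type argument) gives the order-interval conclusion of \eqref{benga6}; the one-function Volterra item \eqref{benga3} and the two-function item \eqref{benga4} likewise follow by running the continuity-point functional and checking rationality of the output with $\exists^{2}$.

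The harder direction is to close the loop back to \eqref{benga1}, i.e.\ to show that \emph{any} of the Baire-1 items computes a Baire realiser. Here is where $h$ does the essential work. Given a sequence $(O_{n})_{n\in\N}$ of dense open subsets of $[0,1]$, I would set $X_{n}:=[0,1]\setminus O_{n}$, which is closed and nowhere dense (density of $O_{n}$ gives nowhere-density of its complement), and form the function $h$ from \eqref{mopi}. By Theorem \ref{fronk}, $\osc_{h}=h$ everywhere and is computable in $\exists^{2}$, so the pair $(h,\osc_{h})$ is a legitimate input to any functional of items \eqref{benga2}--\eqref{benga6}. The key observation is that $h(x)=0$ exactly when $x\in\bigcap_{n}O_{n}$, and $h$ is continuous at $x$ precisely at those points (by the first and second bullets in the proof of Theorem \ref{fronk}); hence a continuity point of $h$ returned by \eqref{benga2}, or the continuity-point output extracted from \eqref{benga3}, \eqref{benga5}, lands in $\bigcap_{n}O_{n}$, giving the Baire realiser. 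For item \eqref{benga6} I would argue that the order-interval $\{x:h(a)\le h(x)\le h(b)\}$ being infinite forces, via the discrete range $\{0\}\cup\{2^{-(n+1)}:n\in\N\}$ of $h$, the existence of a point where $h=0$, again a point of $\bigcap_{n}O_{n}$.

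For the Volterra two-function item \eqref{benga4} the reduction to \eqref{benga1} is slightly more delicate: given $(O_{n})$, I would build $h$ as above and a second auxiliary Baire 1 function $g$ whose discontinuity set is arranged to be disjoint from, or complementary to, that of $h$ on the relevant set, so that a common continuity/discontinuity point forced by \eqref{benga4} again pins down a point of $\bigcap_{n}O_{n}$; the standard trick is to take $g$ discontinuous precisely on a fixed countable dense set (e.g.\ using Thomae-type data) so that ``both continuous'' at $x$ entails $x\in\bigcap_{n}O_{n}$. The main obstacle I anticipate is precisely this item \eqref{benga4}: matching up the continuity/discontinuity patterns of two functions so that the Volterra output is guaranteed to be informative about $\bigcap_{n}O_{n}$ requires a careful construction of $g$ and verification (using $\exists^{2}$ and Theorem \ref{fronk}-style reasoning) that $\osc_{g}$ is again computable, rather than the more mechanical specialisations elsewhere.

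Finally, items \eqref{bengafinal}, \eqref{bengafinal3}, and \eqref{bengafinal4} come essentially for free once the core cycle is in place. For \eqref{bengafinal}, ``pointwise discontinuous'' is a weaker hypothesis than ``Baire 1'' in the forward directions yet the reduction functions $h$ (and $g$) are themselves pointwise discontinuous, so every construction above still applies verbatim; for \eqref{bengafinal3} and \eqref{bengafinal4} I would use Theorem \ref{dorn} to note that $h$ is upper semi-continuous (respectively lies in a suitable small Baire class $\mathscr{B}_{1}^{\xi}$), so the same $h$ serves as a valid input in those restricted settings and no new argument is needed. Assembling the implications $\eqref{benga1}\rightarrow\eqref{benga5}\rightarrow\eqref{benga2}\rightarrow\{\eqref{benga3},\eqref{benga6}\}\rightarrow\eqref{benga1}$ together with the separate treatment of \eqref{benga4} yields a cycle through all items, and by Definition \ref{spec} each arrow is witnessed by a fixed index (indeed by a term of G\"odel's $T$ extended with $\mu^{2}$), establishing the full computational equivalence.
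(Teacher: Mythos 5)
Your overall architecture coincides with the paper's: item \eqref{benga1} is the hub, the function $h$ from \eqref{mopi} together with Theorem \ref{fronk} carries the reductions back to Baire realisers, the decompositions as in \eqref{tachyon2} and \eqref{tachyon} give the forward directions, and the upper semi-continuity of $h$ settles items \eqref{bengafinal}--\eqref{bengafinal4}. The genuine gap is in your treatment of the two disjunctive Volterra items \eqref{benga3} and \eqref{benga4}. For \eqref{benga3}~$\di$~\eqref{benga1} you propose to feed $(h,\osc_{h})$ to the given functional and ``extract the continuity-point output''. But the specification of \eqref{benga3} is a disjunction, and nothing prevents the functional from answering with a \emph{rational discontinuity point} of $h$, i.e.\ a rational lying in $\cup_{n}X_{n}$. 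This case cannot be ruled out: take $O_{n}:=[0,1]\setminus\{q_{n}\}$ with $(q_{n})_{n\in\N}$ an enumeration of $\Q\cap[0,1]$; then $h$ is discontinuous exactly on the rationals, so a functional that always returns $q_{0}$ meets specification \eqref{benga3} on this input while revealing nothing about $\cap_{n}O_{n}$. There is then no continuity point to extract and your loop does not close. The same phenomenon is fatal to your plan for \eqref{benga4}~$\di$~\eqref{benga1}, as you yourself anticipate: with a Thomae-type $g$, the ``both discontinuous'' case occurs at every rational of $\cup_{n}X_{n}$ and is uninformative, while arranging $D_{g}$ to be disjoint from $\cup_{n}X_{n}$ would require computing a dense set inside $\cap_{n}O_{n}$, which is circular (at least as hard as the Baire realiser you are trying to build).

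The paper avoids both traps with two devices absent from your proposal. First, it never reduces \eqref{benga3} or \eqref{benga4} directly to \eqref{benga1}: the only arrow out of \eqref{benga4} is \eqref{benga4}~$\di$~\eqref{benga3}, obtained by pairing the \emph{input} function $f$ of \eqref{benga3} with Thomae's function $T$ from \eqref{thomae}. Since $T$ is discontinuous exactly on $\Q$, a point where $f$ and $T$ are both continuous is an irrational continuity point of $f$, and a point where both are discontinuous is a rational discontinuity point of $f$; thus \emph{both} disjuncts of the \eqref{benga4}-output are valid \eqref{benga3}-outputs, and the disjunctive target absorbs exactly the case you could not control. (Conversely, \eqref{benga4} is obtained from \eqref{benga1} by taking complements of the unions $D_{n}\cup E_{n}$ of the decomposition sets of $f$ and $g$; note that your alternative of ``running the continuity-point functional and checking rationality'' cannot give \eqref{benga4}, since running it on $f$ and on $g$ separately yields two different points.) Second, for the arrows between \eqref{benga2} and \eqref{benga3} the paper first performs an $\exists^{2}$-search over $\Q$ for a rational witness ($\osc_{f}(q)>0$, resp.\ $\osc_{f}(q)=0$) and only invokes the given functional in the complementary case, arguing that its output is then forced into the informative disjunct; this preliminary case distinction over $\Q$ is entirely missing from your sketch, and without it ``checking rationality of the output'' stalls precisely when the \eqref{benga2}-functional returns a rational continuity point, which is not a valid \eqref{benga3}-output. (Making the forcing rigorous is itself delicate and again uses Thomae-type pairing, e.g.\ feeding $f+T$ rather than $f$.) Two smaller points: your observation that $h$ is continuous exactly where $h=0$ does not by itself handle the quasi-continuity, lower semi-continuity and Darboux variants of \eqref{benga2} in the backward direction, for which the paper's density argument is needed; and in \eqref{benga6} the conclusion must be that the output $a$ itself satisfies $h(a)=0$, not merely that some point of the order-interval does, since otherwise you would still have to compute that point.
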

\begin{proof}
First of all, many results will be proved using $h:[0,1]\di \R$ as in \eqref{mopi}.  
By Theorem~\ref{fronk}, the associated oscillation function $\osc_{h}:[0,1]\di \R$ is available, which we will tacitly assume.

\medskip

For the implication \eqref{benga1} $\di$ \eqref{benga2}, let $f:[0,1]\di \R$ be Baire 1 and let $\osc_{f}:[0,1]\di \R$ be its oscillation.  
The following set, readily defined using $\exists^{2}$, is closed and nowhere dense, as can be found in e.g.\ \cite{oxi}*{p.\ 31, \S7}:
\be\label{tachyon2}  \textstyle
D_{k}:=\{ x\in [0,1] : \osc_{f}(x)\geq \frac{1}{2^{k}} \},
\ee
The union $D_{f}=\cup_{k\in \N}D_{k}$ collects all points where $f$ is discontinuous.  Hence, $O_{k}:= [0,1]\setminus D_{k}$ is open and dense for all $k\in \N$, while $y\in \cap_{k\in\N}O_{k}$ implies $\osc_{f}(y)=0$, i.e.\ $y$ is a point of continuity of $f$ by definition (of the oscillation function).  Hence, we have established that a Baire realiser computes some $x\in C_{f}$ for a Baire 1 function $f$ and its oscillation function, i.e.\ item \eqref{benga2} follows.   
The generalisation involving pointwise discontinuous functions (item \eqref{bengafinal}) is now immediate, as the very same proof goes through. 

\medskip

For the implication \eqref{benga2} $\di $ \eqref{benga1}, let $(O_{n})_{n\in \N}$ be a sequence of open and dense sets.  
Then $(X_{n})_{n\in \N}$ for $X_{n}:= [0,1]\setminus O_{n}$ is a sequence of closed and nowhere dense sets. 
Now consider the function $h:[0,1]\di \R$ from \eqref{mopi}, which is Baire 1 as noted above.  
Item \eqref{benga2} yields $y\in [0,1]$ where $h$ is continuous and we must have $y \in \cap_{n\in \N}O_{n}$.  
Indeed, in case $y \in X_{m_{0}}$, there is $N\in \N$ such that $h(z)>\frac{1}{2^{m_{0}+1}}$ for $z\in B(y, \frac{1}{2^{N}})$, by the continuity of $h$ at $y$. 
However, since $O:=\cap_{n\leq m_{0}+2}O_{n}$ is dense in $[0,1]$, there is some $z_{0}\in B(y, \frac{1}{2^{N}}) \cap O$, which yields $h(z_{0})\leq \frac{1}{2^{m_{0}+2}}$ by the definition of $h$, a contradiction. 
Hence, we have $y \in \cap_{n\in \N}O_{n}$ as required by the specification of Baire realisers as in item~\eqref{benga1}.
The same argument works for quasi-continuity, lower semi-continuity, and the (local) Darboux property. 

\medskip

For the implication \eqref{benga2} $\di $ \eqref{benga3}, let $f$ and $\osc_{f}$ be as in the latter item.  
Note that $\mu^{2}$ can find $q\in \Q\cap [0,1]$ such that $\osc_{f}(q)>_{\R}0$, if such exists.  
If there is no such rational, the functional from item \eqref{benga2} must output an \emph{irrational} $y\in [0,1]$ such that $f$ is continuous at $y$. 
Hence, item \eqref{benga3} follows while the generalisation involving pointwise discontinuous functions (item \eqref{bengafinal}) is again immediate. 

\medskip

For the implication \eqref{benga3} $\di $ \eqref{benga2}, let $f$ and $\osc_{f}$ be as in the latter item.  
Note that $\mu^{2}$ can find $q\in \Q\cap [0,1]$ such that $\osc_{f}(q)=_{\R}0$, if such exists; such rational is a point of continuity of $f$, which is are required for item \eqref{benga2}.  
If there is no such rational, the functional from item \eqref{benga3} must output an {irrational} $y\in [0,1]$ such that $f$ is continuous at $y$. 
Hence, item \eqref{benga2} follows from item \eqref{benga3} while the generalisation involving pointwise discontinuous functions (item \eqref{bengafinal}) is again immediate. 

\medskip

Next, assume item \eqref{benga4} and recall Thomae's function $T$ as in \eqref{thomae}, which is regulated and hence Baire 1; the oscillation function is $\osc_{T}$ readily defined using $\exists^{2}$.  
Since $T$ is continuous on $\R\setminus \Q$, item \eqref{benga4} yields item \eqref{benga3}.  To obtain item \eqref{benga4} from a Baire realiser, let $f, g:[0,1]$ be Baire 1 with oscillation functions $\osc_{f}$ and $\osc_{g}$.  Consider the set $D_{k}$ as in \eqref{tachyon2} and let $E_{k}$ be the same set for $g$, i.e.\ defined as: 
\be\label{tachyon3}  \textstyle
E_{k}:=\{ x\in [0,1] : \osc_{g}(x)\geq \frac{1}{2^{k}} \}.
\ee
Then $O_{n}:=[0,1]\setminus (D_{n}\cup E_{n})$ is open and dense as in the previous paragraphs. 
Now use a Baire realiser to obtain $y\in \cap_{n\in \N}O_{n}$.  By definition, $f$ and $g$ are continuous at $y$, i.e.\ item \eqref{benga2} follows.
The generalisation for pointwise discontinuous functions (item \eqref{bengafinal}) is immediate. 

\medskip
\noindent
For item \eqref{benga5}, consider the following (nowhere dense and closed as for \eqref{tachyon2}) set:
\be\label{tachyon}  \textstyle
D_{k, n}:=\{ x\in [0,1] : \osc_{f_{n}}(x)\geq \frac{1}{2^{k}} \} \textup{   for $k,n\in \N$,}
\ee
and where each $f_{n}:[0,1]\di \R$ is Baire 1 with oscillation $\osc_{f_{n}}:[0,1]\di \R$.
Since each $f_{n}$ is continuous outside of $D_{f_{n}}:=\cup_{k\in \N}D_{k,n}$, any $y\not \in \cup_{k, n\in \N}D_{k, n}$ is such that each $f_{n}$ is continuous at $y$.
Hence, item \eqref{benga5} follows from item \eqref{benga1}.  

\medskip

For item \eqref{benga6}, consider $h$ as in \eqref{mopi} and note that if $h(a)>0$, then $\{ x\in [0,1]:h(a)\leq h(x)\leq h(b)\}$ is finite.  
Hence, item \eqref{benga6} must provide $a\in [0,1]$ such that $h(a)=0$, which by definition satisfies $a\not\in \cup_{n\in \N}X_{n}$, i.e.\ item \eqref{benga1} follows. 
%
%

\medskip

%
%
%
Finally, regarding item \eqref{bengafinal3}, the function $h:[0,1]\di \R$ from \eqref{mopi} is in fact upper semi-continuous, assuming $(X_{n})_{n\in \N}$ is a sequence of closed and nowhere dense sets.  
One can shows this directly using the definition of semi-continuity, or observe that $h^{-1}([a, +\infty))$ is either $\emptyset$, $[0,1]$, or a finite union of $X_{i}$, all of which are closed.  
Regarding item \eqref{bengafinal4}, the class $\mathscr{B}_{1}^{\xi}$ includes the (upper and lower) semi-continuous functions if $\xi\geq 1$ (\cite{vuilekech}*{\S2}).  Hence, the equivalences for the restrictions of item \eqref{benga2} as in items \eqref{bengafinal3} or \eqref{bengafinal4} have been established; the very same argument applies to the other items, i.e.\ items \eqref{bengafinal3} and \eqref{bengafinal4} are finished. 
%
%
%
%
%
%
%
\end{proof}
The reader easily verifies that most equivalences in Theorem \ref{nolabel} go through in a small fragment of G\"odel's $T$.

\medskip

Next, we isolate the following result as we need to point out the meaning of `countable set' as an input of a functional.
As in \cite{dagsamXIII, dagsamXII, samwollic22}, we assume that a countable set $D\subset \R$ is given together with $Y:[0,1]\di \N$ which is injective on $D$ \textbf{and} that 
these two objects $D$ and $Y$ are an input for the functional at hand, e.g.\ as in item \eqref{benga7} from Corollary \ref{chron}. 
\begin{cor}\label{chron}
Given $\exists^{2}$, the following is computationally equivalent to a Baire realiser:
\begin{enumerate}
 \renewcommand{\theenumi}{\alph{enumi}}
\setcounter{enumi}{10}
\item A functional that on input a countable dense $D\subset [0,1]$, a function $f:[0,1]\di \R$ in Baire 1, and its oscillation $\osc_{f}$, 
outputs either $d\in D$ such that $f$ is discontinuous at $d$, or $x\not\in D$ such that $f$ is continuous at $x$.\label{benga7}
\end{enumerate}
The equivalence remains correct if we require a bijection from $D$ to $\N$ \(instead of an injection\). 
\end{cor}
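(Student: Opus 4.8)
The plan is to prove the two implications of Definition~\ref{spec} separately, using the already-established equivalences of Theorem~\ref{nolabel} for the reduction \emph{to} a Baire realiser, and a direct Baire-category argument for the reduction \emph{from} a Baire realiser. Throughout I have $\exists^{2}$ (equivalently $\mu^{2}$) available by hypothesis, which suffices to decide all real comparisons appearing below.

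For the implication from a Baire realiser to item~\eqref{benga7}, fix inputs: a countable dense $D\subset[0,1]$ given by its characteristic function, an injection $Y:[0,1]\di\N$ on $D$, a Baire 1 function $f:[0,1]\di\R$, and its oscillation $\osc_{f}$. Using $\exists^{2}$ I would form two families of dense open subsets of $[0,1]$: the sets $O_{k}:=\{x\in[0,1]:\osc_{f}(x)<\tfrac{1}{2^{k}}\}$ (as in \eqref{tachyon2}, whose complements are closed and nowhere dense), and the sets $W_{n}:=[0,1]\setminus\{x\in D:Y(x)=n\}$. Since $Y$ is injective on $D$, each $W_{n}$ is $[0,1]$ minus at most one point, hence dense and open, and its characteristic function is computable \emph{pointwise} from those of $D$ and $Y$. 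Here it is essential that open sets are represented by a characteristic function alone, with no interior witness (Definition~\ref{char}): I therefore never need to locate, or enumerate, the points of $D$. Interleaving $(O_{k})_{k}$ and $(W_{n})_{n}$ into a single sequence and applying the Baire realiser yields a point $y\in\bigcap_{k}O_{k}\cap\bigcap_{n}W_{n}$. As $\bigcap_{k}O_{k}=C_{f}$ and $\bigcap_{n}W_{n}=[0,1]\setminus D$, we get $\osc_{f}(y)=0$ and $y\notin D$, so $f$ is continuous at the point $y\notin D$, a valid output for item~\eqref{benga7}. Note that $C_{f}$ is comeagre and $D$ is meagre, so $C_{f}\setminus D\neq\emptyset$ and the sets really are dense open, consistent with the Baire realiser returning a point; the "discontinuity in $D$" alternative of item~\eqref{benga7} is simply never invoked.

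For the reverse implication I would specialise item~\eqref{benga7} to the canonical input $D=\Q\cap[0,1]$ together with a fixed enumeration $Y_{\Q}$ of the rationals. For this choice the clause "$x\notin D$" means exactly "$x\in[0,1]\setminus\Q$", so item~\eqref{benga7} coincides verbatim with the Volterra item~\eqref{benga3} of Theorem~\ref{nolabel}. Hence any functional satisfying item~\eqref{benga7} yields, by hard-coding these two inputs through a fixed index, a functional satisfying item~\eqref{benga3}; by Theorem~\ref{nolabel} the latter computes a Baire realiser, and composing the two indices gives the required algorithm.

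Finally, for the parenthetical remark I would observe that the enumeration $Y_{\Q}$ used in the reverse implication is in fact a bijection, so that direction is unaffected by strengthening "injection" to "bijection"; and the forward implication used only injectivity of $Y$, hence applies a fortiori to bijective $Y$. The one step that must be executed carefully is the forward direction: verifying that the $W_{n}$ are genuinely dense open with characteristic functions computable pointwise from $D$ and $Y$, so that the Baire realiser may be fed the combined sequence without ever enumerating $D$ or computing the removed points. Everything else is bookkeeping with $\exists^{2}$ and an appeal to Theorem~\ref{nolabel}.
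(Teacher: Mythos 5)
Your proof is correct and follows essentially the same route as the paper: the reverse direction specialises $D=\Q\cap[0,1]$ to recover item (\ref{benga3}) of Theorem \ref{nolabel}, and the forward direction uses the injection $Y$ to remove the points of $D$ in finitely-many-per-set pieces alongside the nowhere dense sets $D_{k}$, then applies the Baire realiser. The only cosmetic difference is that the paper merges everything into one sequence via $E_{k}:=D_{k}\cup\{x\in D:Y(x)\leq k\}$ whereas you interleave two families ($O_{k}$ and the cofinite-singleton sets $W_{n}$), which is equivalent bookkeeping.
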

\begin{proof}
For item \eqref{benga7}, the latter clearly implies item \eqref{benga3} for $D=\Q$, even if we require a bijection from $D$ to $\N$. 
To establish item \eqref{benga7} assuming item \eqref{benga1}, let $D\subset [0,1]$ and $Y:[0,1]\di \N$ be such that the former is dense and the latter is injective on the former.  
Now fix $f:[0,1]\di \R$ in Baire 1 and $\osc_{f}$.  Define $E_{k}:= D_{k}\cup \{x\in D: Y(x)\leq k\}$ where the former is as in \eqref{tachyon2} and the latter is finite.  
Hence, $O_{n}:=[0,1]\setminus E_{n}$ is open and dense.   Any Baire realiser therefore provides $y\in \cap_{n\in \N}O_{n}$, which is such that $f$ is continuous at $y\not \in D$, as required.  
\end{proof}
Note that item \eqref{benga7} in the corollary is based on the generalisation of Volterra's theorem as in Theorem \ref{dorki}.  
We believe many similar results can be established, e.g.\ by choosing different (but equivalent over strong systems) definitions of `countable set', as discussed in Section \ref{froli}.

\medskip

Next, we discuss potential generalisations of Theorem \ref{nolabel}.  
\begin{rem}[Generalisations]\rm
First of all, items \eqref{bengafinal3} and \eqref{bengafinal4} of Theorem \ref{nolabel} imply that certain `small' sub-classes of $\mathscr{B}_{1}$ already yield the required equivalences.
One may wonder whether the same holds for the sub-classes $\mathscr{B}_{1}^{*}$ and $\mathscr{B}_{1}^{**}$ of $\mathscr{B}_{1}$ (see e.g.\ \cite{notsieg} for the former).  As far as we can see, 
the function $h$ (and variations) from \eqref{mopi} does not belong to $\mathscr{B}_{1}^{*}$ or $\mathscr{B}_{1}^{**}$.

\medskip

Secondly, pointwise discontinuous functions are \emph{exactly} those for which $D_{f}$ is \emph{meagre} (\cites{beren2, myerson, vuilekech, oxi}), i.e. the union of nowhere dense sets.  
Since the complement of closed and nowhere dense sets is open and dense, it seems we cannot generalise Theorem \ref{nolabel} beyond pointwise discontinuous functions (without extra effort). 
\end{rem}

\section{Related results}\label{related}
We discuss some results related (directly or indirectly) to Baire realisers. 
\subsection{Baire 1 functions and open sets}\label{dichtbij}
In this section, we motivate the study of the definition of open set from \cite{dagsamVII} and Definition \ref{char} in Section \ref{cdef}, which amounts to \eqref{R1} below.  
In particular, Theorem \ref{bootheel} below expresses that the closed sets $D_{k}$ as in \eqref{tachyon2} are just closed sets 
\emph{without any additional information}.  Hence, the study of Baire 1 already boasts (general) open sets that only satisfy the \eqref{R1}-definition of open sets, i.e.\ for which other representations (see \eqref{R2}-\eqref{R4} below) are not computable, even assuming $\SS_{k}^{2}$; we conjecture that we can add non-trivial non-normal functionals. 

\medskip
\noindent
First of all, the following representations of open sets were studied in \cite{dagsamVII}*{\S7}.
\bdefi[Representations of open sets]~
\begin{enumerate}
\renewcommand{\theenumi}{R.\arabic{enumi}}
\item The open set $O$ is represented by its characteristic function.  
We just have the extra information that $O$ is open \(Definition \ref{char}\).\label{R1}
\item The set $O$ is represented by a function $Y : [0,1] \rightarrow [0,1]$ such that
\begin{itemize}
\item[(i)] we have $O = \{x \mid Y(x) >_{\R} 0\}$, 
\item[(ii)] if $Y(x) > 0$ , then $(x-Y(x),x+Y(x))\cap [0,1] \subseteq O$. 
\end{itemize}\label{R2}
\item The set $O$ is represented by the \emph{continuous} function $Y$ where
\begin{itemize} 
\item[(i)] $Y(x)$ is the distance from $x$ to $[0,1]\setminus O$ if the latter is nonempty,
\item[(ii)] $Y$ is constant 1 if $O = [0,1]$.
\end{itemize}\label{R3}
\item The set $O$ is given as the union of a sequence of open rational intervals $(a_i,b_i)$, the sequence being a representation of $O$.\label{R4}
\end{enumerate}
\edefi
We note that the \eqref{R4}-representation is used in frameworks based on second-order arithmetic (see e.g.\ \cite{simpson2}*{II.5.6}).

\medskip

Secondly, assuming $\exists^2$, it is clear that the information given by a representation increases when going down the list.   
As expected, \eqref{R3} and \eqref{R4} are `the same' by the following theorem.
\begin{thm}\label{friuk}
The functional $\exists^{2}$ computes an \eqref{R3}-representation given an \eqref{R4}-representation, and vice versa. 
\end{thm}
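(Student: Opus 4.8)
The plan is to treat the two implications separately and, in both, to reduce the relevant geometric predicates to arithmetical statements that $\exists^{2}$ can decide; recall here that $\exists^{2}\equiv\mu^{2}$ (as established in Section~\ref{lll}) provides arithmetical comprehension, so that any arithmetical statement about the given data can be decided, and in particular $\WKL_{0}$, hence Heine--Borel compactness for countable covers, is available.

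For the passage from an \eqref{R4}-representation $O=\bigcup_{i}(a_{i},b_{i})$ to an \eqref{R3}-representation, I would exploit that the intended output $Y(x)=d(x,[0,1]\setminus O)$ is the supremum of the radii of closed balls around $x$ contained in $O$, i.e.\ $Y(x)=\sup\{r\in\Q_{\geq 0}:[x-r,x+r]\cap[0,1]\subseteq O\}$. The key point is that, by Heine--Borel, $[x-r,x+r]\cap[0,1]\subseteq O$ holds if and only if $(\exists n)\big([x-r,x+r]\cap[0,1]\subseteq\bigcup_{i\leq n}(a_{i},b_{i})\big)$; for rational $r$ and a rational approximation $\tilde{x}$ of $x$ this is a $\Sigma_{1}^{0}$-condition with a decidable matrix (covering a closed rational interval by finitely many open rational intervals), hence decidable by $\exists^{2}$. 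One first uses $\exists^{2}$ to test the special case $[0,1]\subseteq\bigcup_{i}(a_{i},b_{i})$, i.e.\ $O=[0,1]$, in which case the constant function $1$ is output as demanded by \eqref{R3}(ii); otherwise one computes $Y(x)$ to precision $2^{-k}$ by locating, among the radii $r=j\cdot 2^{-k-1}$, the threshold at which coverage fails, recovering $Y(x)$ from the rational data since the target is $1$-Lipschitz. As $d(\cdot,[0,1]\setminus O)$ is automatically $1$-Lipschitz, the resulting $Y$ is continuous and satisfies \eqref{R3}.

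For the converse I am given the continuous distance function $Y$ (or $Y\equiv 1$) with $O=\{x:Y(x)>_{\R}0\}$, and I would sieve the rational intervals. I enumerate all pairs $(p,q)$ with $p,q\in\Q$ and $0\leq p<q\leq 1$ and keep exactly those with $(p,q)\subseteq O$. Since $O$ is open and, by \eqref{R2}(ii) (which \eqref{R3} refines), every $x\in O$ lies in a rational interval contained in $O$, the union of the kept intervals equals $O$, yielding an \eqref{R4}-representation. The one delicate step is deciding $(p,q)\subseteq O$ with $\exists^{2}$: here I use that $Y$ is $1$-Lipschitz, so $\inf_{x\in[p',q']}Y(x)$ is computable from finitely many grid evaluations of $Y$ with error controlled by the grid spacing, and that $(p,q)\subseteq O$ holds if and only if $\inf_{x\in[p',q']}Y(x)>_{\R}0$ for all rationals $p',q'$ with $p<p'<q'<q$; this is an arithmetical condition in the computed infima, hence $\exists^{2}$-decidable.

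The main obstacle, as I see it, is the first direction: converting the ``raw'' interval data of \eqref{R4} into a genuine continuous distance function forces one to decide coverage of compact subintervals by the given open intervals, which is precisely where Heine--Borel compactness is essential, and one must additionally handle real inputs $x$ through rational approximation and isolate the degenerate case $O=[0,1]$. By contrast, the second direction is routine once one observes that the $1$-Lipschitz property of the distance function makes its infima over rational subintervals effectively computable.
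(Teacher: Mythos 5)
Be aware that the paper's own ``proof'' of Theorem \ref{friuk} is a one-line citation of \cite{dagsamVII}*{Theorem 7.1}, so there is no in-paper argument to match; your reconstruction is, in outline, the standard argument behind that citation and it is essentially correct. For the direction \eqref{R4} to \eqref{R3}: the identity $d(x,[0,1]\setminus O)=\sup\{r\in\Q_{\geq 0}:[x-r,x+r]\cap[0,1]\subseteq O\}$, the reduction of coverage to finite subcoverage, the decidability of the finite-cover matrix for rational data, the $\mu^{2}$-search, the separate test for the degenerate case $O=[0,1]$, and the use of the $1$-Lipschitz property to obtain a genuinely continuous (and extensional) output are all sound. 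One framing quibble: do not phrase compactness as ``$\exists^{2}$ provides $\WKL_{0}$, hence Heine--Borel''. The metatheory is $\ZFC$ (Section \ref{prelim}), so Heine--Borel for countable covers of $[0,1]$ is simply \emph{true} and may be invoked to justify correctness of the search; provability in subsystems of second-order arithmetic plays no role in this computability-theoretic statement. For the direction \eqref{R3} to \eqref{R4}, your sieve with the test ``$\inf_{[p',q']}Y>_{\R}0$ for all rationals $p<p'<q'<q$'' is also correct, and you rightly avoided the trap of evaluating $Y$ only at rational points, which would fail for sets such as $[0,1]\setminus\{x_{0}\}$ with $x_{0}$ irrational.

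The one genuine (though easily repaired) flaw is your restriction to rational endpoints $0\leq p<q\leq 1$ in the second direction. An open subset of $[0,1]$ may contain the endpoints $0$ or $1$ (the dense open sets $O_{k}=[0,1]\setminus D_{k}$ appearing in this paper typically do), and no interval $(p,q)\subseteq[0,1]$ contains the point $0$; so your kept intervals union to $O\setminus\{0,1\}$ rather than $O$, and in the degenerate case $Y\equiv 1$ you output a code for $(0,1)$ instead of $[0,1]$. The \eqref{R4}-convention must allow intervals overlapping the complement of $[0,1]$, with $O=[0,1]\cap\bigcup_{i}(a_{i},b_{i})$; the fix is to enumerate \emph{all} rational pairs $p<q$ and keep $(p,q)$ iff $(p,q)\cap[0,1]\subseteq O$, which reduces to your infimum test applied to the compact sets $[p',q']\cap[0,1]$. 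As a side remark, this direction can be shortened by exploiting clause (i) of \eqref{R3} directly: keep $(s-r,s+r)$ for rationals $s,r$ whenever $Y(s)>_{\R}r$, a test $\exists^{2}$ decides outright. Your infimum-based test is longer but more general, since it uses only that $Y$ is a Lipschitz \eqref{R2}-style witness rather than the exact distance function.
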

\begin{proof}
Immediate from \cite{dagsamVII}*{Theorem 7.1}.
\end{proof}
The \eqref{R2} and \eqref{R3} representations are not computationally equivalent.  
Indeed, the functional $\Delta^{3}$ from \cite{dagsamVII}*{\S7.1} converts open sets as in \eqref{R2} to open sets as in \eqref{R3}.
This functional is not computable in any $\SS_{k}^{2}$ and computes nothing new when combined with $\exists^{2}$, as discussed in detail in \cite{dagsamVII}*{\S7}. 

\medskip

Thirdly, we have the following main theorem, where $\Omega_{\fin}$ is studied in \cite{dagsamXI, dagsamXIII,samwollic22} based on the following definition.
\bdefi
A \emph{finiteness realiser} $\Omega_{\fin}$ is defined when the input $X\subset \R$ is finite and $\Omega_{\fin}(X)$ then outputs a finite sequence of reals that includes all elements of $X$.
\edefi
We note that $\Omega_{\fin}$ is explosive in that $\Omega_{\fin}+\SS_{1}^{2}$ computes $\SS_{2}^{2}$ (see \cite{dagsamXI, dagsamXIII}).
\begin{thm}\label{bootheel}
Together with $\exists^{2}$, the following computes a Baire realiser \(Def.~\ref{pip}\):
\begin{center}
a functional that on input $f:[0,1]\di \R$ in Baire 1, its oscillation function $\osc_{f}$, and $n\in \N$, outputs an \eqref{R2}-representation of $[0,1]\setminus D_{n}$ as in \eqref{tachyon2}. 
\end{center}
Combining the centred operation with $\Delta$, which converts \eqref{R2}-representations into \eqref{R3}-ones, we can compute $\Omega_{\fin}$.
\end{thm}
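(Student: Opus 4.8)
The plan is to run the proof of the Baire category theorem itself, using the centred operation to supply the computational data---namely \eqref{R2}-representations---that makes the classical nested-interval argument effective. To build a Baire realiser from the centred functional $\Phi$ and $\exists^{2}$, I would start from a sequence $(O_{n})_{n\in\N}$ of dense open subsets of $[0,1]$, put $X_{n}:=[0,1]\setminus O_{n}$ (closed and nowhere dense), and form the Baire $1$ function $h:[0,1]\di\R$ from \eqref{mopi}, which is $\exists^{2}$-computable from the $X_{n}$. By Theorem~\ref{fronk} we have $\osc_{h}=h$, and this oscillation function is therefore also computable in $\exists^{2}$; this is exactly what licenses feeding $(h,\osc_{h},n)$ to $\Phi$. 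The output $Y_{n}:=\Phi(h,\osc_{h},n)$ is then an \eqref{R2}-representation of the dense open set $U_{n}:=[0,1]\setminus D_{n}$, where $D_{n}$ is as in \eqref{tachyon2} for $h$.

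With the $Y_{n}$ in hand I would construct a nested sequence of closed rational balls $\overline{B(c_{n},r_{n})}$ with $r_{n}\leq r_{n-1}/2$, maintaining the invariant $\overline{B(c_{n},r_{n})}\subseteq U_{n}\cap B(c_{n-1},r_{n-1})$. The inductive step is where the \eqref{R2}-data is essential: since $U_{n}$ is dense and $B(c_{n-1},r_{n-1})$ is open and nonempty, an $\exists^{2}$-search (equivalently, $\mu^{2}$) locates a rational $q\in B(c_{n-1},r_{n-1})$ with $Y_{n}(q)>0$; the defining property of an \eqref{R2}-representation then guarantees $B(q,Y_{n}(q))\cap[0,1]\subseteq U_{n}$, so choosing a rational $r_{n}$ below $\min\{Y_{n}(q),\,r_{n-1}-|q-c_{n-1}|,\,r_{n-1}/2\}$ yields the required sub-ball. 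The centres $(c_{n})_{n\in\N}$ then form a Cauchy sequence with a known modulus, converging to $y\in\bigcap_{n}U_{n}$, i.e.\ to a continuity point of $h$; as in the proof of \eqref{benga2}$\di$\eqref{benga1} in Theorem~\ref{nolabel}, $\osc_{h}(y)=0$ forces $h(y)=0$, hence $y\notin\bigcup_{m}X_{m}$ and $y\in\bigcap_{m}O_{m}$. Outputting $y$ gives a Baire realiser, and the whole procedure is a single algorithm in $\Phi$ and $\exists^{2}$.

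For the second claim, I would encode a finite set $X\subset[0,1]$ (given by its characteristic function) as the upper semi-continuous, hence Baire $1$, indicator $f:=\mathbb{1}_{X}$. Every point of the finite set $X$ is isolated while every ball around it meets the complement, so $\osc_{f}$ equals $\mathbb{1}_{X}$ and is read off directly from the input; moreover $D_{1}=X$ for the set in \eqref{tachyon2} at threshold $\tfrac12$. Applying the centred functional to $(f,\osc_{f},1)$ produces an \eqref{R2}-representation of $[0,1]\setminus X$, and $\Delta$ converts this into an \eqref{R3}-representation, namely the continuous distance function $g(x)=\operatorname{dist}(x,X)$ whose zero set is exactly $X$ (the empty case $X=\emptyset$ being handled trivially).

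It remains to extract $X$ from $g$, and this is the step requiring the most care. The key observation is that, for rationals $p<q$, the predicate ``$[p,q]$ contains a zero of $g$'' is arithmetical in $g$ (it unfolds to $(\forall k)(\exists r\in\Q\cap[p,q])(g(r)<\tfrac{1}{2^{k}})$) and is therefore decidable by $\exists^{2}=\mu^{2}$, which settles the entire arithmetical hierarchy. Using this predicate I would locate the leftmost zero $a_{1}=\min X$ by binary search, obtaining it as an exact real with modulus $2^{-n}$; then, having found $a_{i}$, I would binary-search for $a_{i+1}=\inf\{x>a_{i}:g(x)=0\}$ via the likewise-decidable predicate ``$(a_{i},c]$ contains a zero of $g$''. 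The loop halts once ``$(a_{i},1]$ contains a zero'' returns false, and termination is guaranteed precisely by the promise that $X$ is finite; the resulting list $(a_{1},\dots,a_{k})$ enumerates $X$, so the composite is a finiteness realiser $\Omega_{\fin}$. The main obstacle throughout is conceptual rather than computational: recognising that the \eqref{R2}-datum is exactly the ``witness of openness'' that drives the Baire construction, and that the \eqref{R3} distance function reduces the location of the points of a finite set to a sequence of $\exists^{2}$-decidable zero-searches.
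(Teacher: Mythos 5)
Your first part follows essentially the same route as the paper: the paper also (implicitly) feeds the function $h$ from \eqref{mopi} together with $\osc_{h}=h$ (Theorem \ref{fronk}) into the centred operation, and then simply cites the constructive proof of the Baire category theorem (\cite{bish1}*{p.\ 84}, \cite{dagsamVII}*{Theorem 7.10}) for the nested-interval argument you spell out; your version is just more explicit, and the step extracting $y\in\bigcap_{m}O_{m}$ from the continuity point is correct. For the second part you use the same encoding as the paper (the characteristic function of the finite set $X$ is its own oscillation function, and $D_{n}=X$ in \eqref{tachyon2}), but you diverge at the extraction step: the paper converts the \eqref{R3}-representation further into an \eqref{R4}-representation via Theorem \ref{friuk} and then invokes the enumeration functional $\mathcal{E}$ of \cite{dagsamXIII}*{Lemma 4.11}, whereas you extract the zeros of the distance function $g$ directly by binary search. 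Your route is more self-contained, which is a genuine plus, but it contains a flaw.

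The flaw: your unfolding $(\forall k)(\exists r\in\Q\cap[p,q])(g(r)<\tfrac{1}{2^{k}})$ is correct for closed intervals, but the predicate ``$(a_{i},c]$ contains a zero of $g$'' is \emph{not} ``likewise'' decidable by the same unfolding. Since $g(a_{i})=0$ and $g$ is continuous, for every $k$ there are rationals $r$ just to the right of $a_{i}$ with $g(r)<\tfrac{1}{2^{k}}$, so the unfolded formula holds for \emph{every} $c>a_{i}$, whether or not $(a_{i},c]$ actually contains a zero. As written, your binary search for $a_{i+1}$ therefore collapses back onto $a_{i}$, and your halting test ``$(a_{i},1]$ contains no zero'' never fires, so the procedure neither progresses nor terminates --- a real problem, since $\Omega_{\fin}$ must output a finite list. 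The repair is easy: decide instead the predicate $(\exists \delta\in\Q^{+})\big(\text{``}[a_{i}+\delta,c]\text{ contains a zero of }g\text{''}\big)$, which is still arithmetical in $g$ and in the Cauchy representation of $a_{i}$, hence decidable from $\exists^{2}$, and which is the correct predicate because the finitely many zeros of $g$ are isolated. With this one-line fix (or by following the paper through \eqref{R4} and the cited enumeration functional), your argument goes through.
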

\begin{proof}
For the first part, the centred operation provides an \eqref{R2}-representation for the open and dense set $O_{n}:=[0,1]\setminus D_{n}$.
It is a straightforward verification that, given an \eqref{R2}-representation for a sequence of open and sense sets, the usual constructive proof of the Baire category theorem goes through (see \cite{bish1}*{p.\ 84} for the latter).  
A detailed proof, assuming $\exists^{2}$, can be found in \cite{dagsamVII}*{Theorem 7.10}.  In other words, given an \eqref{R2}-representation for each $O_{n}$, we can compute $y\in \cap_{n\in \N}O_{n}$ assuming $\exists^{2}$, and we are done.

\medskip

For the second part, let $X\subset \R$ be finite.  Define $F_{X}$ as $1$ in case $x\in X$, and $0$ otherwise.  Clearly, $F_{X}$ is Baire 1 and the function $\osc_{F_{X}}$ is just $F_{X}$ itself. 
The centred operation form the theorem, together with $\Delta$ and Theorem \ref{friuk}, provides an \eqref{R4} representation of each $[0,1] \setminus D_{n}$. 
However, by \cite{dagsamXIII}*{Lemma 4.11}, there is a functional $\mathcal{E}$, computable in $\exists^2$, such that for any sequences $(a_{n})_{n\in \N}$, $(b_{n})_{n\in \N}$, if the closed set $C=[0,1]\setminus \cup_{n\in \N}(a_{n}, b_{n}) $ is countable, then $\mathcal{E}(\lambda n.(a_{n}, b_{n}))$ enumerates the points in $C$.  Hence, functional $\mathcal{E}$ can enumerate the finite set $D_{k}$ for any $k\in\N$, and therefore yields an enumeration of $X$.
\end{proof}
Note that for the second part, we could restrict the centred operation to some fixed $n\in \N$.

\subsection{Functionals related to Baire realisers}\label{froli}
We show that Baire realisers compute certain witnessing functionals for the uncountability of $\R$, called \emph{strong Cantor realisers} in \cite{samwollic22}.

\medskip

First of all, we introduce the following notion of countable set, equivalent to the usual definition over strong enough logical systems.

\begin{defi}[Height countable]\label{hoogzalieleven}
A set $A\subset \R$ is \emph{height countable} if there is a \emph{height} $H:\R\di \N$ for $A$, i.e.\ for all $n\in \N$, $A_{n}:= \{ x\in A: H(x)<n\}$ is finite.  
\end{defi}
The notion of `height' is mentioned in e.g.\ \cite{demol}*{p.\ 33} and \cite{vadsiger, royco}, from whence we took this notion.   The following remark is crucial for the below.
\begin{rem}[Inputs and heights]\label{inhe}\rm
A functional defined on height countable sets always takes \textbf{two} inputs: the set $A\subset \R$ \textbf{and} the height $H:\R\di \N$.  
We note that given a sequence of finite sets $(X_{n})_{n\in \N}$ in $\R$, a height can be defined as $H(x):= (\mu n)(x\in X_{n})$ using Feferman's $\mu^{2}$. 
A set is therefore height countable iff it is the union over $\N$ of finite sets.  
\end{rem}
The following witnessing functional for the uncountability of $\R$ was introduced in \cite{samwollic22}. 
\begin{defi}[Strong Cantor realiser]
A \emph{strong Cantor realiser} is any functional that on input a height countable $A\subset [0,1]$, outputs $y\in [0,1]\setminus A$. 
\end{defi}
Following Remark \ref{inhe}, a strong Cantor realiser takes as input a countable set \textbf{and} its height as in Definition \ref{hoogzalieleven}.
Modulo $\exists^{2}$, this amounts to an input consisting of a sequence $(X_{n})_{n\in \N}$ of finite sets in $[0,1]$ and an output $y\in [0,1]\setminus \cup_{n\in \N}X_{n}$.
It was shown in \cite{samwollic22} that many operations on regulated functions are computationally equivalent to strong Cantor realisers, thereby justifying Definition \ref{hoogzalieleven}.  
By contrast, we could not find comparable computational equivalences for `normal' and `weak' Cantor realisers (see \cite{dagsamXII, samwollic22}), which make use of the more standard definition of `countable set' based on injections and bijections to $\N$.

\medskip

Next, we have the following theorem connecting some of the aforementioned functionals.
\begin{thm} Assuming $\exists^{2}$, we have that
\begin{itemize}
\item any Baire realiser computes a strong Cantor realiser,
\item the functional $\Omega_{\fin}$ computes a strong Cantor realiser.
\end{itemize}
\end{thm}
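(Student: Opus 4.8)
The plan is to exploit Remark~\ref{inhe}: modulo $\exists^{2}$, a height countable set $A\subseteq[0,1]$ together with its height $H$ is the same datum as a sequence $(X_{n})_{n\in\N}$ of \emph{finite} subsets of $[0,1]$ with $A=\bigcup_{n\in\N}X_{n}$, where $X_{n}:=\{x\in A: H(x)<n\}$. In both items the task of a strong Cantor realiser then reduces to producing some $y\in[0,1]\setminus\bigcup_{n\in\N}X_{n}$, and the characteristic function of each $X_{n}$ is immediately available from $F_{A}$ and $H$; indeed, no use of $\exists^{2}$ is even needed here, since $x\in X_{n}\leftrightarrow (F_{A}(x)=1\wedge H(x)<n)$.

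For the first item, observe that each finite set $X_{n}$ is closed and nowhere dense, so that $O_{n}:=[0,1]\setminus X_{n}$ is open and dense, with characteristic function computable as above. First I would feed the sequence $\lambda n.O_{n}$ to the given Baire realiser $\xi$ from Definition~\ref{pip}; by the defining property of $\xi$ this yields $y:=\xi(\lambda n.O_{n})\in\bigcap_{n\in\N}O_{n}$. Since $\bigcap_{n\in\N}O_{n}=[0,1]\setminus\bigcup_{n\in\N}X_{n}=[0,1]\setminus A$, the point $y$ lies outside $A$, which is exactly what a strong Cantor realiser must output. The only thing to check is that each $O_{n}$ is genuinely dense and open in the sense of Definition~\ref{char}, which is immediate from the finiteness (hence nowhere density) of $X_{n}$.

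For the second item, I would use $\Omega_{\fin}$ to turn the height countable set into an honest enumeration. Applying $\Omega_{\fin}$ to each finite set $X_{n}$ produces a finite sequence of reals containing all elements of $X_{n}$; concatenating these finite sequences over $n\in\N$ gives a single sequence $(z_{m})_{m\in\N}$ whose range is a countable superset of $A=\bigcup_{n\in\N}X_{n}$. It then remains to produce a real $y\in[0,1]$ with $y\neq_{\R}z_{m}$ for all $m\in\N$; this is the standard construction underlying the uncountability of $\R$, computable in $\exists^{2}$ (build nested closed intervals $I_{0}\supseteq I_{1}\supseteq\cdots$ with each $I_{m+1}$ a subinterval of $I_{m}$ avoiding $z_{m}$, choosing the subinterval via the $\exists^{2}$-decidable comparison of $z_{m}$ with rational division points, and take $y$ to be the common limit). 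Then $y\notin\{z_{m}:m\in\N\}\supseteq A$, as required.

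The arguments are short, and the main work is really bookkeeping: verifying that the finite sets, their complements, and the concatenated enumeration are correctly represented, and that $\exists^{2}$ suffices both for the real-number comparisons involved and for extracting the avoiding point $y$ in the second item. The one conceptual point worth stressing is that one must \emph{not} hand $[0,1]\setminus A$ directly to the Baire realiser, since this single set need not be open; the decomposition of $A$ into the nowhere dense pieces $X_{n}$, and the corresponding dense open sets $O_{n}$, is exactly what makes the Baire category theorem applicable.
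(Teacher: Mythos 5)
Your proposal is correct and follows essentially the same route as the paper: for the first item, decompose $A$ into the finite (hence closed, nowhere dense) sets $X_{n}$ determined by the height, feed the dense open complements to the Baire realiser, and note that any point of the intersection avoids $A$; for the second item, apply $\Omega_{\fin}$ to each $X_{n}$, merge the resulting finite sequences into one enumeration, and diagonalise against it (the paper invokes the algorithm from Gray's paper where you spell out the nested-interval construction, but this is the same standard argument).
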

\begin{proof}
Fix a height countable set $A\subset [0,1]$ and its height function $H:\R\di \N$.  
By definition $A_{n}:= \{x\in [0,1]: H(x)<n\}$ is finite, hence $O_{n}:= [0,1]\setminus A_{n}$ is open and dense.  
A Baire realiser provides $y\in \cap_{n\in \N}O_{n}$, which immediately yields $y\not \in A$.  
Similarly, $\Omega_{\fin}(A_{n})$ yields an enumeration of $A_{n}$ and $\mu^{2}$ readily yields a sequence $(a_{n})_{n\in \N}$ containing all elements of $A$. 
There are (efficient) algorithms (see \cite{grayk}) to find $y\in [0,1]$ such that $y\ne a_{n}$ for all $n\in \N$, and we are done.
\end{proof}
On a related note, the reader easily verifies that Corollary \ref{chron} goes through for `countable' replaced by `height-countable', keeping in mind Remark \ref{inhe}.
We believe that many such variations are possible.

\appendix

\section{Some details of Kleene's computability theory}\label{appendisch}
Kleene's computability theory borrows heavily from type theory and higher-order arithmetic.  We briefly sketch some of the associated notions.  

\medskip

First of all, Kleene's S1-S9 schemes define `$\Psi$ is computable in terms of $\Phi$' for objects $\Phi,\Psi$ of any finite type.  
Now, the collection of \emph{all finite types} $\mathbf{T}$ is defined by the following two clauses:
\begin{center}
(i) $0\in \mathbf{T}$   and   (ii)  If $\sigma, \tau\in \mathbf{T}$ then $( \sigma \di \tau) \in \mathbf{T}$,
\end{center}
where $0$ is the type of natural numbers, and $\sigma\di \tau$ is the type of mappings from objects of type $\sigma$ to objects of type $\tau$.
In this way, $1\equiv 0\di 0$ is the type of functions from numbers to numbers, and  $n+1\equiv n\di 0$.  
We view sets $X$ of type $\sigma$ objects as given by characteristic functions $F_{X}^{\sigma\di 0}$.  

\medskip

Secondly, for variables $x^{\rho}, y^{\rho}, z^{\rho},\dots$ of any finite type $\rho\in \mathbf{T}$,   types may be omitted when they can be inferred from context.  
The constants include the type $0$ objects $0, 1$ and $ <_{0}, +_{0}, \times_{0},=_{0}$  which are intended to have their usual meaning as operations on $\N$.
Equality at higher types is defined in terms of `$=_{0}$' as follows: for any objects $x^{\tau}, y^{\tau}$, we have
\be\label{aparth}
[x=_{\tau}y] \equiv (\forall z_{1}^{\tau_{1}}\dots z_{k}^{\tau_{k}})[xz_{1}\dots z_{k}=_{0}yz_{1}\dots z_{k}],
\ee
if the type $\tau$ is composed as $\tau\equiv(\tau_{1}\di \dots\di \tau_{k}\di 0)$.  

\medskip

Thirdly, we introduce the usual notations for common mathematical notions, like real numbers, as also can be found in \cite{kohlenbach2}.  
\begin{defi}[Real numbers and related notions]\label{keepintireal}\rm~
\begin{enumerate}
 \renewcommand{\theenumi}{\alph{enumi}}
\item Natural numbers correspond to type zero objects, and we use `$n^{0}$' and `$n\in \N$' interchangeably.  Rational numbers are defined as signed quotients of natural numbers, and `$q\in \Q$' and `$<_{\Q}$' have their usual meaning.    
\item Real numbers are coded by fast-converging Cauchy sequences $q_{(\cdot)}:\N\di \Q$, i.e.\  such that $(\forall n^{0}, i^{0})(|q_{n}-q_{n+i}|<_{\Q} \frac{1}{2^{n}})$.  
We use Kohlenbach's `hat function' from \cite{kohlenbach2}*{p.\ 289} to guarantee that every $q^{1}$ defines a real number.  
\item We write `$x\in \R$' to express that $x^{1}:=(q^{1}_{(\cdot)})$ represents a real as in the previous item and write $[x](k):=q_{k}$ for the $k$-th approximation of $x$.    
\item Two reals $x, y$ represented by $q_{(\cdot)}$ and $r_{(\cdot)}$ are \emph{equal}, denoted $x=_{\R}y$, if $(\forall n^{0})(|q_{n}-r_{n}|\leq {2^{-n+1}})$. Inequality `$<_{\R}$' is defined similarly.  
We sometimes omit the subscript `$\R$' if it is clear from context.           
\item Functions $F:\R\di \R$ are represented by $\Phi^{1\di 1}$ mapping equal reals to equal reals, i.e.\ extensionality as in $(\forall x , y\in \R)(x=_{\R}y\di \Phi(x)=_{\R}\Phi(y))$.\label{EXTEN}
\item The relation `$x\leq_{\tau}y$' is defined as in \eqref{aparth} but with `$\leq_{0}$' instead of `$=_{0}$'.  Binary sequences are denoted `$f^{1}, g^{1}\leq_{1}1$', but also `$f,g\in C$' or `$f, g\in 2^{\N}$'.  Elements of Baire space are given by $f^{1}, g^{1}$, but also denoted `$f, g\in \N^{\N}$'.
\item Sets of type $\rho$ objects $X^{\rho\di 0}, Y^{\rho\di 0}, \dots$ are given by their characteristic functions $F^{\rho\di 0}_{X}\leq_{\rho\di 0}1$, i.e.\ we write `$x\in X$' for $ F_{X}(x)=_{0}1$. \label{koer} 
\end{enumerate}
\end{defi}
For completeness, we list the following notational convention for finite sequences.  
\begin{nota}[Finite sequences]\label{skim}\rm
The type for `finite sequences of objects of type $\rho$' is denoted $\rho^{*}$, which we shall only use for $\rho=0,1$.  
We shall not always distinguish between $0$ and $0^{*}$. 
Similarly, we assume a fixed coding for finite sequences of type $1$ and shall make use of the type `$1^{*}$'.  
In general, we do not always distinguish between `$s^{\rho}$' and `$\langle s^{\rho}\rangle$', where the former is `the object $s$ of type $\rho$', and the latter is `the sequence of type $\rho^{*}$ with only element $s^{\rho}$'.  The empty sequence for the type $\rho^{*}$ is denoted by `$\langle \rangle_{\rho}$', usually with the typing omitted.  

\medskip

Furthermore, we denote by `$|s|=n$' the length of the finite sequence $s^{\rho^{*}}=\langle s_{0}^{\rho},s_{1}^{\rho},\dots,s_{n-1}^{\rho}\rangle$, where $|\langle\rangle|=0$, i.e.\ the empty sequence has length zero.
For sequences $s^{\rho^{*}}, t^{\rho^{*}}$, we denote by `$s*t$' the concatenation of $s$ and $t$, i.e.\ $(s*t)(i)=s(i)$ for $i<|s|$ and $(s*t)(j)=t(|s|-j)$ for $|s|\leq j< |s|+|t|$. For a sequence $s^{\rho^{*}}$, we define $\overline{s}N:=\langle s(0), s(1), \dots,  s(N-1)\rangle $ for $N^{0}<|s|$.  
For a sequence $\alpha^{0\di \rho}$, we also write $\overline{\alpha}N=\langle \alpha(0), \alpha(1),\dots, \alpha(N-1)\rangle$ for \emph{any} $N^{0}$.  By way of shorthand, 
$(\forall q^{\rho}\in Q^{\rho^{*}})A(q)$ abbreviates $(\forall i^{0}<|Q|)A(Q(i))$, which is (equivalent to) quantifier-free if $A$ is.   
\end{nota}

\begin{bibdiv}
\begin{biblist}
\bib{voordedorst}{book}{
  author={Appell, J\"{u}rgen},
  author={Bana\'{s}, J\'{o}zef},
  author={Merentes, Nelson},
  title={Bounded variation and around},
  series={De Gruyter Series in Nonlinear Analysis and Applications},
  volume={17},
  publisher={De Gruyter, Berlin},
  date={2014},
  pages={x+476},
}

\bib{avi2}{article}{
  author={Avigad, Jeremy},
  author={Feferman, Solomon},
  title={G\"odel's functional \(``Dialectica''\) interpretation},
  conference={ title={Handbook of proof theory}, },
  book={ series={Stud. Logic Found. Math.}, volume={137}, },
  date={1998},
  pages={337--405},
}

\bib{beren2}{article}{
  author={Baire, Ren\'{e}},
  title={Sur les fonctions de variables r\'eelles},
  journal={Ann. di Mat.},
  date={1899},
  pages={1--123},
  volume={3},
  number={3},
}

\bib{brakke}{article}{
  author={Brattka, Vasco},
  author={Hendtlass, Matthew},
  author={Kreuzer, Alexander P.},
  title={On the uniform computational content of the Baire category theorem},
  journal={Notre Dame J. Form. Log.},
  volume={59},
  date={2018},
  number={4},
  pages={605--636},
}

\bib{brakke2}{article}{
  author={Brattka, Vasco},
  title={Computable versions of Baire's category theorem},
  conference={ title={Mathematical foundations of computer science, 2001 (Mari\'{a}nsk\'{e} L\'{a}zn\u {e})}, },
  book={ series={Lecture Notes in Comput. Sci.}, volume={2136}, publisher={Springer, Berlin}, },
  date={2001},
  pages={224--235},
}

\bib{bish1}{book}{
  author={Bishop, Errett},
  title={Foundations of constructive analysis},
  publisher={McGraw-Hill},
  date={1967},
  pages={xiii+370},
}

\bib{boekskeopendoen}{book}{
  author={Buchholz, Wilfried},
  author={Feferman, Solomon},
  author={Pohlers, Wolfram},
  author={Sieg, Wilfried},
  title={Iterated inductive definitions and subsystems of analysis},
  series={LNM 897},
  publisher={Springer},
  date={1981},
  pages={v+383},
}

\bib{dendunne}{article}{
  author={Dunham, William},
  title={A historical gem from Vito Volterra},
  journal={Math. Mag.},
  volume={63},
  date={1990},
  number={4},
  pages={234--237},
}

\bib{gaud}{article}{
  author={Gauld, David},
  title={Did the Young Volterra Know about Cantor?},
  journal={Math. Mag.},
  volume={66},
  date={1993},
  number={4},
  pages={246--247},
}

\bib{grayk}{article}{
  author={Gray, Robert},
  title={Georg Cantor and transcendental numbers},
  journal={Amer. Math. Monthly},
  volume={101},
  date={1994},
  number={9},
  pages={819--832},
}

\bib{hankelwoot}{book}{
  author={Hankel, Hermann},
  title={{Untersuchungen \"uber die unendlich oft oscillirenden und unstetigen Functionen.}},
  pages={pp.\ 51},
  year={1870},
  publisher={Ludwig Friedrich Fues, Memoir presented at the University of T\"ubingen on 6 March 1870},
}

\bib{hillebilly2}{book}{
  author={Hilbert, David},
  author={Bernays, Paul},
  title={Grundlagen der Mathematik. II},
  series={Zweite Auflage. Die Grundlehren der mathematischen Wissenschaften, Band 50},
  publisher={Springer},
  date={1970},
}

\bib{vuilekech}{article}{
  author={Kechris, Alexander S.},
  author={Louveau, Alain},
  title={A classification of Baire class $1$ functions},
  journal={Trans. Amer. Math. Soc.},
  volume={318},
  date={1990},
  number={1},
  pages={209--236},
}

\bib{kinkvol}{article}{
  author={Kim, Sung Soo},
  title={Notes: A Characterization of the Set of Points of Continuity of a Real Function},
  journal={Amer. Math. Monthly},
  volume={106},
  date={1999},
  number={3},
  pages={258--259},
}

\bib{kleeneS1S9}{article}{
  author={Kleene, Stephen C.},
  title={Recursive functionals and quantifiers of finite types. I},
  journal={Trans. Amer. Math. Soc.},
  volume={91},
  date={1959},
  pages={1--52},
}

\bib{kohlenbach2}{article}{
  author={Kohlenbach, Ulrich},
  title={Higher order reverse mathematics},
  conference={ title={Reverse mathematics 2001}, },
  book={ series={Lect. Notes Log.}, volume={21}, publisher={ASL}, },
  date={2005},
  pages={281--295},
}

\bib{longmann}{book}{
  author={Longley, John},
  author={Normann, Dag},
  title={Higher-order Computability},
  year={2015},
  publisher={Springer},
  series={Theory and Applications of Computability},
}

\bib{demol}{book}{
  author={Moll, Victor H.},
  title={Numbers and functions},
  series={Student Mathematical Library},
  volume={65},
  publisher={American Mathematical Society},
  date={2012},
  pages={xxiv+504},
}

\bib{myerson}{article}{
  author={Myerson, Gerald I.},
  title={First-class functions},
  journal={Amer. Math. Monthly},
  volume={98},
  date={1991},
  number={3},
  pages={237--240},
}

\bib{nemoto1}{article}{
  author={Nemoto, Takako},
  title={A constructive proof of the dense existence of nowhere-differentiable functions in $C[0, 1]$},
  journal={Computability},
  volume={9},
  date={2020},
  number={3-4},
  pages={315--326},
}

\bib{dagsam}{article}{
  author={Normann, Dag},
  author={Sanders, Sam},
  title={Nonstandard Analysis, Computability Theory, and their connections},
  journal={Journal of Symbolic Logic},
  volume={84},
  number={4},
  pages={1422--1465},
  date={2019},
}

\bib{dagsamII}{article}{
  author={Normann, Dag},
  author={Sanders, Sam},
  title={The strength of compactness in Computability Theory and Nonstandard Analysis},
  journal={Annals of Pure and Applied Logic, Article 102710},
  volume={170},
  number={11},
  date={2019},
}

\bib{dagsamVI}{article}{
  author={Normann, Dag},
  author={Sanders, Sam},
  title={Representations in measure theory},
  journal={Submitted, arXiv: \url {https://arxiv.org/abs/1902.02756}},
  date={2019},
}

\bib{dagsamVII}{article}{
  author={Normann, Dag},
  author={Sanders, Sam},
  title={Open sets in Reverse Mathematics and Computability Theory},
  journal={Journal of Logic and Computation},
  volume={30},
  number={8},
  date={2020},
  pages={pp.\ 40},
}

\bib{dagsamV}{article}{
  author={Normann, Dag},
  author={Sanders, Sam},
  title={Pincherle's theorem in reverse mathematics and computability theory},
  journal={Ann. Pure Appl. Logic},
  volume={171},
  date={2020},
  number={5},
  pages={102788, 41},
}

\bib{dagsamIX}{article}{
  author={Normann, Dag},
  author={Sanders, Sam},
  title={The Axiom of Choice in Computability Theory and Reverse Mathematics},
  journal={Journal of Logic and Computation},
  volume={31},
  date={2021},
  number={1},
  pages={297-325},
}

\bib{dagsamXI}{article}{
  author={Normann, Dag},
  author={Sanders, Sam},
  title={On robust theorems due to Bolzano, Weierstrass, Jordan, and Cantor in Reverse Mathematics},
  journal={To appear in Journal of Symbolic Logic, arxiv: \url {https://arxiv.org/abs/2102.04787}},
  pages={pp.\ 47},
  date={2022},
}

\bib{dagsamXII}{article}{
  author={Normann, Dag},
  author={Sanders, Sam},
  title={Betwixt Turing and Kleene},
  journal={LNCS 13137, proceedings of LFCS22},
  pages={pp.\ 18},
  date={2022},
}

\bib{dagsamX}{article}{
  author={Normann, Dag},
  author={Sanders, Sam},
  title={On the uncountability of $\mathbb {R}$},
  journal={To appear in Journal of Symbolic Logic, arxiv: \url {https://arxiv.org/abs/2007.07560}},
  pages={pp.\ 37},
  date={2022},
}

\bib{dagsamXIII}{article}{
  author={Normann, Dag},
  author={Sanders, Sam},
  title={On the computational properties of basic mathematical notions},
  journal={To appear in the Journal of Logic and Computation, arxiv: \url {https://arxiv.org/abs/2203.05250}},
  pages={pp.\ 43},
  date={2022},
}

\bib{fosgood}{article}{
  author={Osgood, William F.},
  title={Non-Uniform Convergence and the Integration of Series Term by Term},
  journal={Amer. J. Math.},
  volume={19},
  date={1897},
  number={2},
  pages={155--190},
}

\bib{oxi}{book}{
  author={Oxtoby, John C.},
  title={Measure and category},
  series={Graduate Texts in Mathematics},
  volume={2},
  edition={2},
  note={A survey of the analogies between topological and measure spaces},
  publisher={Springer},
  date={1980},
  pages={x+106},
}

\bib{riehabi}{book}{
  author={Riemann, Bernhard},
  title={Ueber die Darstellbarkeit einer Function durch eine trigonometrische Reihe},
  publisher={Abhandlungen der K\"oniglichen Gesellschaft der Wissenschaften zu G\"ottingen, Volume 13},
  note={Habilitation thesis defended in 1854, published in 1867, pp.\ 47},
}

\bib{rieal}{book}{
  author={Riemann (auth.), Bernhard},
  author={Roger Clive Baker and Charles O.\ Christenson and Henry Orde (trans.)},
  title={Bernhard Riemann: collected works},
  publisher={Kendrick Press},
  year={2004},
  pages={555},
}

\bib{royco}{book}{
  title={Real Analysis},
  author={Royden, Halsey L.},
  series={Lecture Notes in Mathematics},
  year={1989},
  publisher={Pearson Education},
}

\bib{samwollic22}{article}{
  author={Sanders, Sam},
  title={On the computational properties of the uncountability of the reals},
  year={2022},
  journal={To appear in Lecture notes in Computer Science, Proceedings of WoLLIC22, Springer, arxiv: \url {https://arxiv.org/abs/2206.12721}},
}

\bib{shola}{article}{
  author={Sholapurkar, V. M. },
  date={2007},
  journal={Resonance},
  number={1},
  pages={76--79},
  title={On a theorem of Vito Volterra},
  volume={12},
  year={2007},
}

\bib{volterraplus}{article}{
  author={Silva, Cesar E.},
  author={Wu, Yuxin},
  title={No Functions Continuous Only At Points In A Countable Dense Set},
  journal={Preprint, arxiv: \url {https://arxiv.org/abs/1809.06453v3}},
  date={2018},
}

\bib{simpson2}{book}{
  author={Simpson, Stephen G.},
  title={Subsystems of second order arithmetic},
  series={Perspectives in Logic},
  edition={2},
  publisher={CUP},
  date={2009},
  pages={xvi+444},
}

\bib{snutg}{article}{
  author={Smith, Henry J. Stephen},
  title={On the Integration of Discontinuous Functions},
  journal={Proc. Lond. Math. Soc.},
  volume={6},
  date={1874/75},
  pages={140--153},
}

\bib{zweer}{book}{
  author={Soare, Robert I.},
  title={Recursively enumerable sets and degrees},
  series={Perspectives in Mathematical Logic},
  publisher={Springer},
  date={1987},
  pages={xviii+437},
}

\bib{notsieg}{article}{
  author={Sworowski, Piotr},
  author={Sieg, Waldemar},
  title={Uniform limits of $\mathcal {B}_1^{**}$-functions},
  journal={Topology Appl.},
  volume={292},
  date={2021},
  pages={Paper No. 107630, 6},
}

\bib{thomeke}{book}{
  author={Thomae, Carl J.T.},
  title={Einleitung in die Theorie der bestimmten Integrale},
  publisher={Halle a.S. : Louis Nebert},
  date={1875},
  pages={pp.\ 48},
}

\bib{tur37}{article}{
  author={Turing, Alan},
  title={On computable numbers, with an application to the Entscheidungs-problem},
  year={1936},
  journal={Proceedings of the London Mathematical Society},
  volume={42},
  pages={230-265},
}

\bib{vadsiger}{book}{
  author={Vatssa, B.S.},
  title={Discrete Mathematics (4th edition)},
  publisher={New Age International},
  date={1993},
  pages={314},
}

\bib{volaarde2}{article}{
  author={Volterra, Vito},
  title={Alcune osservasioni sulle funzioni punteggiate discontinue},
  journal={Giornale di matematiche},
  volume={XIX},
  date={1881},
  pages={76-86},
}

\end{biblist}
\end{bibdiv}
\end{document}